\documentclass[11pt]{amsart}
\usepackage{amsbsy,amssymb,amscd,amsfonts,latexsym,amstext,delarray, amsmath,color,caption}
\usepackage{tikz,lscape}
\usetikzlibrary{matrix,arrows}
\usepackage{graphicx}
\input xy

\xyoption{all}
\pagestyle{plain}
\usepackage{tikz-cd}
%
%

    \hfuzz 30pt
    \vfuzz 30pt


    \topmargin 0pt
    \advance \topmargin by -\headheight
    \advance \topmargin by -\headsep

    \textheight 8.9in
\hoffset=0.30cm
\voffset=0.2cm

    \oddsidemargin 0pt
    \evensidemargin \oddsidemargin
    \marginparwidth 0.5in

    \textwidth 6.4in



\setlength{\parskip}{0.16cm}
\parindent 0in

\newtheorem {theorem}    {Theorem}[section]

\newtheorem {lemma}      [theorem]    {Lemma}

\theoremstyle{definition}

\theoremstyle{remark}



\def\Bhat{{\widehat{B}}}
\def\Ghat{{\widehat{G}}}

\def\a{\alpha}

\def\eps{\varepsilon}

\def\lam{\lambda}            
                
\def\phi{\varphi}

\def\a{\alpha}

\def\Z{{\mathbb Z}}
\def\C{{\mathbb C}}


\def\Q{\mathbb{Q}}     
\def\Z{\mathbb{Z}}     
\def\C{\mathbb{C}}     

\def\myscale{.5}
\def\mywidth{.8pt}
\def\vertexradius{.1}
\def\vertex(#1){\fill (#1) circle (\vertexradius)}

\begin{document}


\title{\bf {Uniqueness of representation--theoretic \\hyperbolic Kac--Moody groups over $\Z$}}

\date{\today}

\author{Lisa Carbone and Frank Wagner}

\begin{abstract} For a simply laced and hyperbolic Kac--Moody group $G=G(R)$ over a commutative ring  $R$ with 1, we consider a map from a finite presentation of $G(R)$ obtained by  Allcock and Carbone  to a representation--theoretic construction $G^{\lambda}(R)$ corresponding to an integrable representation $V^{\lambda}$ with dominant integral weight $\lambda$. When $R=\Z$, we prove that this map extends to a group homomorphism
$\rho_{\lambda,\Z}: G(\Z)  \to   G^{\lambda}(\Z).$ We prove that  the kernel $K^{\lambda}$ of  $\rho_{\lam,\Z}$ lies in $H(\C)$ and if  the  natural group homomorphism $\varphi:G(\Z)\to G(\C)$ is injective, then $K^{\lambda}\leq H(\Z)\cong(\Z/2\Z)^{rank(G)}$.
\end{abstract}

\maketitle

\section{Introduction}

Here we consider and compare two distinct constructions of a simply laced and hyperbolic Kac--Moody group $G=G(R)$ over a commutative ring  $R$ with 1. One of these groups is   Tits's construction of $G(R)$, though we work only with a finite presentation of $G(R)$ obtained by  Allcock and Carbone ([AC]). The other is a Kac--Moody group $G^{\lambda}(R)$ constructed in [Ca] (following [CG]) using an integrable representation $V^{\lambda}$ of the underlying Kac--Moody algebra with dominant integral weight $\lambda$.

We consider a natural map between generators of $G(R)$ and  $G^{\lambda}(R)$.  When $R=\Z$, we prove that this map extends to a group homomorphism
$\rho_{\lambda,\Z}:G(\Z)  \to   G^{\lambda}(\Z).$ We prove that  the kernel $K^{\lambda}$ of the map $\rho_{\lam,\Z}: G(\Z)\to G^{\lam}(\Z)$ lies in $H(\C)$ and if  the  natural group homomorphism $\varphi:G(\Z)\to G(\C)$ is injective, then $K^{\lambda}\leq H(\Z)\cong(\Z/2\Z)^{rank(G)}$. Injectivity of the natural map $\varphi:G(\Z)\to G(\C)$ is not currently known and depends on functorial properties of Tits' group ([Ti]).

Our notion of a `Kac--Moody group' is  the infinite dimensional analog of an {\it elementary Chevalley group} over the ring $R$, namely, a group generated by real root group generators. The notion of {\it  Chevalley group} over $R$, that is, the infinite dimensional analog of the Chevalley--Demazure group scheme, has not yet been fully formulated, though the groundwork for establishing it was given in [Ti] and discussed further in [A] and [C].

The methods for constructing Kac--Moody groups over fields  are numerous, though we will only refer to constructions of [CG] (following [G]) and [RR]).  When $R$ is a field,  Garland constructed affine Kac--Moody groups as central extensions of loop groups and characterized the dependence of a completion of $G^{\lambda}(R)$  on $\lambda$  in terms of the Steinberg cocycle ([G]).

Over fields, the group constructed in [RR] coincides with the Kac--Moody group $G(R)$ of Tits ([Ti]). Let 
$K^{\lambda}(R)=Ker(\rho_{\lambda}: G(R)\to G^{\lambda}(R))$. There are a number of known results which determine the natural extension of $K^{\lambda}$ to completions of $G(R)$ and  $G^{\lambda}(R)$ when $R$ is a field.

For symmetrizable Kac--Moody groups over fields, [BR] showed that $K^{\lambda}$ is contained in the center of a completion of $G(R)$. Recent work of Rousseau ([Rou]) shows that over fields of characteristic zero, completions $G(R)$ (as in [RR]) and $G^{\lambda}(R)$ (as in [CG])  are isomorphic as topological groups (see also [Mar]).  The complete Kac--Moody group of [CG] is constructed with respect to a choice of dominant integral weight $\lambda$. Thus Rousseau's work also shows that over fields of characteristic zero, the complete Kac--Moody groups of [CG] are independent of the choice of $\lambda$ up to isomorphism, as topological groups.

Here our interest lies in  {\it minimal} or incomplete Kac--Moody groups over rings, where the methods for determining $K^{\lambda}$ are less transparent. Thus we work only over $R=\Z$ and study the kernel of $\rho_{\lam,\Z}$  when $G$ is simply laced and hyperbolic. We hope to eventually have the techniques to extend these results to a wider class of Kac--Moody groups over more general commutative rings.

The authors wish to thank Daniel Allcock for his comments which helped to clarify the results in the last section.

\bigskip\section{Tits' Kac--Moody group}

Let $\mathfrak{g}$ be a Kac--Moody algebra with Cartan subalgebra $\mathfrak{h}$ and root space decomposition:
$$\mathfrak{g}=\mathfrak{g}^+\ \oplus\ \mathfrak{h}\ \oplus\ \mathfrak{g}^-$$
The roots $\Delta\subset\ \mathfrak{h}^*$ are the eigenvalues of the simultaneous adjoint action of $\mathfrak{h}$ on $\mathfrak{g}$, and $\mathfrak{g}^{\pm}=\oplus_{\alpha\in\Delta^{\pm}} \mathfrak{g}_{\alpha}$
where the root spaces 
$$\mathfrak{g}_{\alpha}\ =\ \{x\in \mathfrak{g}\mid[h,x]=\alpha(h)x,\ h\in \mathfrak{h}\}$$
are the corresponding eigenspaces. When $\mathfrak{g}$ is infinite dimensional, $|\Delta|=\infty$.  In this case, 
$\mathfrak{g}$ has 2 types of roots: {\it real roots} with positive norm and {\it imaginary roots} with negative or zero norm. The imaginary roots will not play a role in what follows, as we will work with Tits' presentation for Kac--Moody groups, which uses only the real root groups as generators.

Tits showed how Kac--Moody groups can be presented by generators and relations, generalizing the Steinberg presentation for finite dimensional Lie groups ([Ti]).

In the finite dimensional case, there is a  Chevalley type commutation relation of the form
$$[\chi_{\alpha}(u),\chi_{\beta}(v)]
	=
	\prod_{m,n}
	\chi_{m\alpha+n\beta}(C_{mn\alpha\beta}u^m v^n)
$$
between every pair of root groups $U_{\alpha}$, $U_{\beta}$. Here $u,v\in R$, $C_{mn\alpha\beta}$ are integers (structure constants) and the $\chi_{\alpha}$ are viewed as formal symbols in
$$U_{\alpha}=\{\chi_{\alpha}(u)\mid\alpha\in \Delta,\ u\in R\}\cong (R,+).$$
However, in the infinite dimensional case, Tits' presentation of Kac--Moody groups has infinitely many Chevalley commutation relations. To describe these, we give the following definition.

Let $(\alpha,\beta)$ be a pair of real roots and let $W$ be the Weyl group. Then $(\alpha,\beta)$ is called a \it{prenilpotent pair, }\rm if  there exist $w,\ w'\in W$ such that
{ $$w\alpha, \ w\beta\in\Delta^{re}_+{\text{ and }}w'\alpha, \ w'\beta\in\Delta^{re}_-.$$}
A pair of roots $\{\alpha,\beta\}$ is prenilpotent if and only if $\alpha\neq -\beta$ and 
{$$(\mathbb{Z}_{>0} \alpha + \mathbb{Z}_{>0} \beta )\cap \Delta^{re}_+$$} is a finite set. For every prenilpotent pair of roots $\{\alpha,\beta\}$, Tits defined  the Chevalley commutation relation
{\[
	[\chi_{\alpha}(u),\chi_{\beta}(v)]
	=
	\prod_{{{m\alpha+n\beta\in
	(\mathbb{Z}_{>0} \alpha + \mathbb{Z}_{>0} \beta )\cap \Delta^{re}_+}}}
	\chi_{m\alpha+n\beta}(C_{mn\alpha\beta}u^m v^n)
\]}
where $u,v\in R$  and $C_{mn\alpha\beta}$ are integers (structure constants).

\section{Tits' presentation}

Tits defined the {\it Steinberg group} 

$$\mathfrak{St}(R)=\ast_{\alpha\in\Delta^{re}} \ U_{\alpha}/ \text{{\tiny{Chev. comm. relns. on prenilpotent pairs}}}$$
Tits' Kac--Moody group ${G}(R)$ is a quotient of the Steinberg group $\mathfrak{St}(R)$ by some additional relations which are easy to describe.

Tits' Kac--Moody group has an infinite set of generators and an infinite set of defining relations. The sets of generators and relations are   redundant and can be reduced significantly, as shown in [A] and [AC].

In [A], Allcock defined a new functor, the {\it pre--Steinberg group} $\mathfrak{PSt}(R)$ and showed that in many cases $\mathfrak{PSt}(R)\cong \mathfrak{St}(R)$. The presentation of $\mathfrak{PSt}(R)$ is defined in terms of the Dynkin diagram rather than the full infinite root system.

\section{Simply laced hyperbolic type}

\medskip
 Carbone and Allcock obtained a further simplification of the isomorphism $\mathfrak{PSt}(R)\cong \mathfrak{St}(R)$ for root systems that are simply laced and hyperbolic ([AC]).

\medskip
{ A Dynkin diagram  is {\it simply laced} if it consists only of single bonds between nodes.}

\medskip
{ It is {\it  hyperbolic} if it is neither of affine nor or finite dimensional type, but every proper connected subdiadram is either of affine  or finite dimensional type.
}

 \begin{table}[h]
\setlength\unitlength{1mm}
\begin{picture}(110,99)(0,-4)
\put(0,2){\makebox[0pt][l]{rank 10}}
\put(15,0){\makebox[0pt][l]{
\begin{tikzpicture}[line width=\mywidth, scale=\myscale]
\draw (0,0) -- (7,0);
\draw (1,0) -- (1,1);
\draw (5,0) -- (5,1);
\vertex (0,0);
\vertex (1,0);
\vertex (2,0);
\vertex (3,0);
\vertex (4,0);
\vertex (5,0);
\vertex (6,0);
\vertex (7,0);
\vertex (1,1);
\vertex (5,1);
\end{tikzpicture}%
}}%
\put(55,0){\makebox[0pt][l]{
\begin{tikzpicture}[line width=\mywidth, scale=\myscale]
\draw (0,0) -- (8,0);
\draw (2,0) -- (2,1);
\vertex (0,0);
\vertex (1,0);
\vertex (2,0);
\vertex (3,0);
\vertex (4,0);
\vertex (5,0);
\vertex (6,0);
\vertex (7,0);
\vertex (8,0);
\vertex (2,1);
\end{tikzpicture}%
}}%
\put(0,15){\makebox[0pt][l]{rank 9}}
\put(15,13){\makebox[0pt][l]{
\begin{tikzpicture}[line width=\mywidth, scale=\myscale]
\draw (0,0) -- (6,0);
\draw (1,0) -- (1,1);
\draw (4,0) -- (4,1);
\vertex (0,0);
\vertex (1,0);
\vertex (2,0);
\vertex (3,0);
\vertex (4,0);
\vertex (5,0);
\vertex (6,0);
\vertex (1,1);
\vertex (4,1);
\end{tikzpicture}%
}}%
\put(50,13){\makebox[0pt][l]{
\begin{tikzpicture}[line width=\mywidth, scale=\myscale]
\draw (0,0) -- (7,0);
\draw (3,0) -- (3,1);
\vertex (0,0);
\vertex (1,0);
\vertex (2,0);
\vertex (3,0);
\vertex (4,0);
\vertex (5,0);
\vertex (6,0);
\vertex (7,0);
\vertex (3,1);
\end{tikzpicture}%
}}%
\put(90,9){\makebox[0pt][l]{
\begin{tikzpicture}[line width=\mywidth, scale=\myscale]
\draw 
(180:2.307) -- 
(180:1.307) -- 
(135:1.307) -- 
(90:1.307) -- 
(45:1.307) --
(0:1.307) --
(-45:1.307) --
(-90:1.307) -- 
(-135:1.307) --
(180:1.307);
\vertex (180:2.307); 
\vertex (180:1.307); 
\vertex (135:1.307); 
\vertex (90:1.307); 
\vertex (45:1.307);
\vertex (0:1.307);
\vertex (-45:1.307);
\vertex (-90:1.307); 
\vertex (-135:1.307); 
\end{tikzpicture}%
}}%
\put(0,28){\makebox[0pt][l]{rank 8}}
\put(15,27){\makebox[0pt][l]{
\begin{tikzpicture}[line width=\mywidth, scale=\myscale]
\draw (0,0) -- (5,0);
\draw (1,0) -- (1,1);
\draw (3,0) -- (3,1);
\vertex (0,0);
\vertex (1,0);
\vertex (2,0);
\vertex (3,0);
\vertex (4,0);
\vertex (5,0);
\vertex (1,1);
\vertex (3,1);
\end{tikzpicture}%
}}%
\put(45,27){\makebox[0pt][l]{
\begin{tikzpicture}[line width=\mywidth, scale=\myscale]
\draw (0,0) -- (5,0);
\draw (2,0) -- (2,2);
\vertex (0,0);
\vertex (1,0);
\vertex (2,0);
\vertex (3,0);
\vertex (4,0);
\vertex (5,0);
\vertex (2,1);
\vertex (2,2);
\end{tikzpicture}%
}}%
%
\put(75,26){\makebox[0pt][l]{
\begin{tikzpicture}[line width=\mywidth, scale=\myscale]
\draw 
(180:2.152) -- 
(180:1.152) -- 
(128:1.152) -- 
(77:1.152) -- 
(26:1.152) --
(-26:1.152) --
(-77:1.152) --
(-128:1.152) --
(180:1.152);
\vertex (180:2.152);
\vertex (180:1.152);
\vertex (128:1.152);
\vertex (77:1.152);
\vertex (26:1.152);
\vertex (-26:1.152);
\vertex (-77:1.152);
\vertex (-128:1.152);
\vertex (180:1.152);
\end{tikzpicture}%
}}%
\put(0,46){\makebox[0pt][l]{rank 7}}
\put(15,45){\makebox[0pt][l]{
\begin{tikzpicture}[line width=\mywidth, scale=\myscale]
\draw (0,0) -- (4,0);
\draw (1,0) -- (1,1);
\draw (2,0) -- (2,1);
\vertex (0,0);
\vertex (1,0);
\vertex (2,0);
\vertex (3,0);
\vertex (4,0);
\vertex (1,1);
\vertex (2,1);
\end{tikzpicture}%
}}%
\put(40,43){\makebox[0pt][l]{
\begin{tikzpicture}[line width=\mywidth, scale=\myscale]
\draw (-2,0)--(-1,0)--(-.5,.866)--(.5,.866)--(1,0)--(.5,-.866)--(-.5,-.866)--(-1,0);
\vertex (-2,0);
\vertex (-1,0);
\vertex (-.5,.866);
\vertex (.5,.866);
\vertex (1,0);
\vertex (.5,-.866);
\vertex (-.5,-.866);
\end{tikzpicture}%
}}%
\put(0,61.5){\makebox[0pt][l]{rank 6}}
\put(15,57){\makebox[0pt][l]{
\begin{tikzpicture}[line width=\mywidth, scale=\myscale]
\draw (0,0) -- (3,0);
\draw (2,-1) -- (2,1);
\vertex (0,0);
\vertex (1,0);
\vertex (2,0);
\vertex (3,0);
\vertex (2,1);
\vertex (2,-1);
\end{tikzpicture}%
}}%
\put(35,57.3){\makebox{
\begin{tikzpicture}[line width=\mywidth, scale=\myscale]
\draw (0,0) -- (-1,0);
\draw (0,0) -- (-.309,.951);
\draw (0,0) -- (-.309,-.951);
\draw (0,0) -- (.809,.588);
\draw (0,0) -- (.809,-.588);
\vertex (0,0);
\vertex (-1,0);
\vertex (-.309,.951);
\vertex (-.309,-.951);
\vertex (.809,.588);
\vertex (.809,-.588);
\end{tikzpicture}%
}}%
\put(49,58){\makebox[0pt][l]{
\begin{tikzpicture}[line width=\mywidth, scale=\myscale]
\draw (-1.851,0)--(-.851,0)--(-.263,.809)--(.688,.5)--(.688,-.5)--(-.263,-.809)--(-.851,0);
\vertex (-1.851,0);
\vertex (-.851,0);
\vertex (-.263,.809);
\vertex (-.263,-.809);
\vertex (.688,.5);
\vertex (.688,-.5);
\end{tikzpicture}%
}}%
\put(0,76){\makebox[0pt][l]{rank 5}}
\put(15,73){\makebox[0pt][l]{
\begin{tikzpicture}[line width=\mywidth, scale=\myscale]
\draw (0,0) -- (0,1.414) -- (1.414,1.414) -- (1.414,0) -- (0,0);
\draw (0,0) -- (1.414,1.414);
\vertex (0,0);
\vertex (0,1.414);
\vertex (1.414,0);
\vertex (1.414,1.414);
\vertex (.707,.707);
\end{tikzpicture}%
}}%
\put(28,73){\makebox[0pt][l]{
\begin{tikzpicture}[line width=\mywidth, scale=\myscale]
\draw (0,0) -- (.707,.707) -- (1.414,0) -- (.707,-.707) -- (0,0);
\draw (0,0) -- (-1,0);
\vertex (0,0);
\vertex (.707,.707);
\vertex (.707,-.707);
\vertex (1.414,0);
\vertex (-1,0);
\end{tikzpicture}%
}}%
\put(0,89){\makebox[0pt][l]{rank 4}}
\put(15,86){\makebox[0pt][l]{
\begin{tikzpicture}[line width=\mywidth, scale=\myscale]
\draw (0,1) -- (-.866,-.5) -- (.866,-.5) -- (0,1);
\draw (0,0) -- (0,1);
\draw (0,0) -- (-.866,-.5);
\draw (0,0) -- (.866,-.5);
\vertex (0,0);
\vertex (0,1);
\vertex (-.866,-.5);
\vertex (.866,-.5);
\end{tikzpicture}%
}}%
\put(30,87){\makebox{
\begin{tikzpicture}[line width=\mywidth, scale=\myscale]
\draw (0,0) -- (0,1) -- (1,1) -- (1,0) -- (0,0);
\draw (0,0) -- (1,1);
\vertex (0,0);
\vertex (0,1);
\vertex (1,0);
\vertex (1,1);
\end{tikzpicture}%
}}%
\put(40,87){\makebox[0pt][l]{
\begin{tikzpicture}[line width=\mywidth, scale=\myscale]
\draw (0,0) -- (.732,.5) -- (.732,-.5) -- (0,0);
\draw (0,0) -- (-1,0);
\vertex (0,0);
\vertex (.732,.5);
\vertex (.732,-.5);
\vertex (-1,0);
\end{tikzpicture}%
}}%
\end{picture}
\caption{The simply-laced hyperbolic Dynkin diagrams.  The {\it
  rank\/} equals the number of nodes.}
\label{tab-diagrams}
\end{table}

\newpage

\bigskip
\section{Finitely many defining relations parametrized over $R$}
{}{{The Kac--Moody group $G(R)$ is generated by elements $X_i(t)$, $t\in R$, $S_i$, $i\in \{1,\dots,\ell\}$. The elements $\widetilde{h}_i(a)$  in the first column of  Table 1 are}}

{{{{$$\widetilde{h}_i(a):=\widetilde{s}_i(a)\widetilde{s}_i(-1),$$
$$\widetilde{s}_i(a):=X_i(a)S_iX_i(a^{-1})S_i^{-1}X_i(a).$$}}}}

\begin{table}[h]
{\tiny{
\begin{tabular}{| l | l |  l  | l |}
\hline
&              &                &  \\
Any $i$, all $a,b\in R^{\times}$ & Each $i$, $t,u\in R$ &$i\neq j$ not adjacent  & $i\neq j$ adjacent  \\
&  & $t,u\in R$  &  $t,u\in R$  \\

&              &                &  \\
\hline
&              &                &  \\

$\widetilde{h}_i(a)\widetilde{h}_i(b)=\widetilde{h}_i(ab)$ &   $X_i(t)X_i(u)=X_i(t+u)$   &      $S_iS_j=S_jS_i$            &  $S_iS_jS_i=S_jS_iS_j$  \\

&              &                &   $S_i^2S_jS_i^{-2}=S_j^{-1}$  \\

      &   &                            &                                      \\
 &   $[S_i^2,X_i(t)]=1$  &    $[S_i,X_j(t)]=1   $&    $X_i(t) S_j S_i=S_j S_i X_j(t)$\\

&              &                &   $S_i^2 X_j(t) S_i^{-2}=X_j(t)^{-1}$\\

      &   &                            &                                      \\

    &  $S_i=X_i(1) S_i X_i(1) S_i^{-1} X_i(1)$              &   $[X_i(t),X_j(u)]=1   $&  $[X_i(t),X_j(u)]=S_i X_j(tu) S_i^{-1}$\\

      &   &                            &                                      \\

  &            &                &  $[X_i(t),S_i X_j(u) S_i^{-1}]=1$\\

\hline

\end{tabular} }}

\caption{{{The defining relations for $G(R)$, $G$ simply laced and hyperbolic, $R$ a commutative ring with 1}}}

\end{table}

Over $R=\mathbb{Z}$, the generators $X_i(u)$ are obtained from $X_i=X_i(1)$ via $X_i(u)=X_i^u$.

Thus we can rewrite the  presentation without the scalars from the underlying ring and obtain a {\it finite} presentation for $G(\mathbb{Z})$ as in Table 2.

\begin{table}[h]
{\small{
\begin{tabular}{| l | l | l | l |}
\hline
& & & \\
Any $i$ & Each $i=\{1,\dots,\ell\}$ &$i\neq j$ not adjacent & $i\neq j$ adjacent \\
& & & \\

\hline

$ (1) \ S_i^4=1$ & $ (2) \ [S_i^2,X_i]=1$ & $ (4) \ S_iS_j=S_jS_i$ & $(7) \  S_iS_jS_i=S_jS_iS_j$ \\

& & & $(8) \ S_i^2S_jS_i^{-2}=S_j^{-1}$ \\

& $ (3) \ S_i=X_i S_i X_i S_i^{-1} X_i$ & $(5) \ [S_i,X_j]=1 $& $(9) \ X_i S_j S_i=S_j S_i X_j$\\

& & & $(10) \ S_i^2 X_j S_i^{-2}=X_j^{-1}$\\

& & $(6) \ [X_i,X_j]=1 $& $(11) \ [X_i,X_j]=S_i X_j S_i^{-1}$\\

& & & $(12)  \ [X_i,S_i X_j S_i^{-1}]=1$\\

\hline

\end{tabular} }}

\caption{The defining relations for $G(\mathbb{Z})$, $G$ simply laced and hyperbolic}

\end{table}


\bigskip\section{Representation--theoretic Kac--Moody groups over rings}

Here we describe the construction of a representation theoretic  Kac--Moody group $G^{\lambda}(R)$, over any commutative ring $R$ with 1, constructed using an integrable highest weight module $V^{\lambda}$ for any symmetrizable Kac--Moody algebra and a $\Z$--form ${\mathcal U}_{{\mathbb{Z}}}$ of the universal enveloping algebra ${\mathcal U}_{R}$. This construction was developed in [Ca] following the methods of [CG] and is a natural generalization of the theory of  elementary Chevalley  groups over commutative rings (see for example [VP]).

Let $\mathfrak{g}=\mathfrak{g}(A)$ be Kac--Moody algebra  corresponding to a symmetrizable generalized Cartan matrix $A=(a_{ij})_{i,j\in I}$.
Let $V^{\lambda}$ be the unique irreducible highest weight module for $\mathfrak{g}$ corresponding to dominant integral weight $\lambda$.

We let $\Lambda\subseteq \mathfrak{h}^*$ be the linear span of  the simple roots $\alpha_i$, for $i\in I$, and let $\Lambda^{\vee}\subseteq
\mathfrak{h}$ be the linear span of the simple coroots
$\alpha^{\vee}_i$, for $i\in I$.

Let $e_i$ and $f_i$ be the Chevalley generators of $\mathfrak{g}$. Let ${\mathcal U}_{\mathbb{C}}$ be the universal enveloping algebra of $\mathfrak{g}$. Let

\medskip\noindent ${\mathcal U}_{{\mathbb{Z}}}\subseteq {\mathcal U}_{{\mathbb{C}}}$ be the ${\mathbb{Z}}$--subalgebra generated by $\dfrac{e_i^{m}}{m!}$,
$\dfrac{f_i^{m}}{m!}$ for $i\in I$ and $\left (\begin{matrix}
h\\ m\end{matrix}\right )$, for
$h\in\Lambda^{\vee}$ and
$m\geq 0$,

\medskip\noindent ${\mathcal U}^+_{{\mathbb{Z}}}$ be the ${\mathbb{Z}}$-subalgebra generated by $\dfrac{e_i^{m}}{m!}$ for $i\in I$
and
$m\geq 0$,

\medskip\noindent ${\mathcal U}^-_{{\mathbb{Z}}}$ be the ${\mathbb{Z}}$--subalgebra generated by $\dfrac{f_i^{m}}{m!}$ for $i\in I$
and
$m\geq 0$,  

\medskip\noindent ${\mathcal U}^0_{{\mathbb{Z}}}\subseteq {\mathcal U}_{{\mathbb{C}}}(\mathfrak{h})$ be the ${\mathbb{Z}}$--subalgebra generated by $\left (\begin{matrix}
h \\ m\end{matrix}\right )$, for
$h\in\Lambda^{\vee}$ and $m\geq 0$.   We set


 $$\mathfrak{g}_{\Z}\ =\ \mathfrak{g}_{\C}\cap {\mathcal U}_{\Z},$$ 
$$\mathfrak{g}^{\pm}_{{\Z}}=\mathfrak{g}^{\pm}_{{\C}}\cap {\mathcal U}_{\Z},$$ 
 $$\mathfrak{h}_{\Z}\ =\ \mathfrak{h}_{\C}\cap {\mathcal U}_{\Z}.$$ 
Then $\mathfrak{g}_{\C}=\mathfrak{g}_{\Z}\otimes_{\Z}\C$  and $\mathfrak{h}_{\Z}$ is generated by $\left (\begin{matrix}
h \\ m\end{matrix}\right )$, for
$h\in\Lambda^{\vee}$ and $m\geq 0$.

For $R$ a commutative ring with 1, set  $\mathfrak{g}_{R}=\mathfrak{g}_{\Z}\otimes_{\Z} R$. Then $\mathfrak{g}_{R}$ is the infinite dimensional analog of the {\it Chevalley algebra} over $R$. Set
$\mathfrak{h}_{R}=\mathfrak{h}_{\Z}\otimes R$ and for $\a\in\Delta$, let 
$\mathfrak{g}^{\alpha}_{R} =\mathfrak{g}^{\alpha}_{\Z} \otimes R$. Then
$$\mathfrak{g}_{R}\ =\ \mathfrak{n}_{R}^-\oplus \mathfrak{h}_{R} \oplus \mathfrak{n}_{R}^+$$
is the root space decomposition of $\mathfrak{g}_{R}$ relative to $\mathfrak{h}_{R}$, where 
$$\mathfrak{n}_{R}^-=\bigoplus_{\a\in\Delta^-}\mathfrak{n}_{R}^{\a},\ \mathfrak{n}_{R}^+=\bigoplus_{\a\in\Delta^+}\mathfrak{n}_{R}^{\a}.$$

\medskip \noindent We now consider the orbit of our highest weight vector $v_{\lambda}\in V$ under $\mathcal{U}_{\mathbb{Z}}$. We have
$$\mathcal{U}^+_{\mathbb{Z}}v_{\lambda}=\mathbb{Z} v_{\lambda}$$
since all elements of ${\mathcal U}^+_{{\mathbb{Z}}}$ except for 1 annihilate $v_{\lambda}$. Also
$$\mathcal{U}^0_{\mathbb{Z}}v_{\lambda}=\mathbb{Z} v_{\lambda}$$
since ${\mathcal U}^0_{{\mathbb{Z}}}$ acts as scalar multiplication on $v_{\lambda}$ by a $\mathbb{Z}$--valued scalar. 

Thus
$$\mathcal{U}_{\mathbb{Z}}\cdot v_{\lambda}=\mathcal{U}^-_{\mathbb{Z}}\cdot (\mathbb{Z} v_{\lambda})=\mathcal{U}^-_{\mathbb{Z}}\cdot(v_{\lambda}).$$

Define $\mathcal{U}(\mathfrak{g})_{R}=\mathcal{U}(\mathfrak{g})_{\Z}\otimes R$. This is the infinite dimensional analog of a {\it hyperalgebra} as in [C] (see also [Hu]). We set 
$$V^{\lambda}_{\mathbb{Z}}\  =\ \mathcal{U}_{\mathbb{Z}}\cdot v_{\lambda}\ =\ \mathcal{U}^-_{\mathbb{Z}}\cdot(v_{\lambda})$$
Then $V^{\lambda}_{\mathbb{Z}}$  is a lattice in $V^{\lambda}_{R}=R\otimes_{\mathbb{Z}}V^{\lambda}_{\mathbb{Z}}$ and a ${\mathcal U}_\mathbb{Z}$--module.
 For each weight $\mu$ of  $V$, let  $V^{\lambda}_{\mu}$ be the corresponding weight
space.  Set
$$V^{\lambda}_{R}\ =\  {R}\otimes_{\mathbb{Z}} V^{\lambda}_{\mathbb{Z}}.$$
and
$$V^{\lambda}_{\mu,R}\ =\ R\otimes_{\Z} V^{\lambda}_{\mu,\Z}$$
so that 
$$V^{\lambda}_{R}\  =\ \oplus_{\mu}V^{\lambda}_{\mu,{R}}.$$
Here, $V_R=V_{\Z}\otimes_{\Z} R$ free $R$--module with basis $v_{\lambda}\otimes 1$ and a module over $\mathfrak{g}_R$ called the {\it Weyl module}. For $s,t\in R$ and $i\in I$,  set 
$$\chi_{\alpha_i}(t)\ =\ exp(\rho(se_i)),$$
$$\chi_{-\alpha_i}(t)\ =\ exp(\rho(tf_i)),$$
where $\rho$ is the defining representation for $V$, and we set
$$\widetilde{w}_{\alpha_i}= \chi_{\alpha_i}(t)\chi_{-\alpha_i}(-t^{-1})\chi_{\alpha_i}(t),$$
$$h_{\alpha_i}(t)\ =\ \widetilde{w}_{\alpha_i}(t) \widetilde{w}_{\alpha_i}(1)^{-1}.$$
Then these are elements of $Aut(V^{\lambda}{R})$, thanks to the
local
nilpotence of $e_{i},$ $f_{i}.$

The group $\widetilde{W}$ is known as the {\it extended Weyl group}. We will use a presentation of $\widetilde{W}$ given in [KP].
 Let $H^{\lambda}\leq G_{\lambda}$ be the subgroup generated by the elements $h_{\alpha_i}(t)$, $t\in R^{\times}$, $i\in I$.

We let $G^{\lambda}(R)\leq Aut(V^{\lambda}_{R})$ be the group: 
$$G^{\lambda}(R)=\langle \chi_{\alpha_i}(s),\ \chi_{-\alpha_i}(t)\mid i\in I,\ s,t\in R\rangle.$$ 
We may refer to $G^{\lambda}(R)$ as a `representation--theoretic Kac--Moody group'. We summarize the construction in the following.

\begin{theorem} Let $\mathfrak{g}$ be a symmetrizable Kac--Moody algebra over a commutative ring $R$ with 1. Let  $\alpha_i$, $i\in I$, be the simple roots and $e_i$, $f_i$ the Chevalley generators of $\mathfrak{g}$. Let $V^{\lambda}_R$ be an $R$--form of an integrable highest weight module $V^{\lambda}$ for $\mathfrak{g}$, corresponding to  dominant integral weight $\lambda$ and defining representation $\rho:\mathfrak{g}\to End(V^{\lambda}_R)$. Then 
$$G^{\lambda}(R)=\langle  \chi_{\a_i}(s)=exp(\rho(se_i)),\ \chi_{-\a_i}(t)=exp(\rho(tf_i))\mid s,t\in R\rangle\leq Aut(V^{\lambda}_R)$$
is a representation--theoretic  Kac--Moody group associated to $\mathfrak{g}$.
\end{theorem}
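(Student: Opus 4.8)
The statement is essentially one of well-definedness: we must verify that each formal exponential $\chi_{\pm\alpha_i}(s)$ genuinely defines an element of $\Aut(V^\lambda_R)$, and that the resulting subgroup of $\Aut(V^\lambda_R)$ is the announced $G^\lambda(R)$. The plan is therefore threefold: (i) make sense of the exponential series over an arbitrary base ring $R$; (ii) show that each $\chi_{\pm\alpha_i}(s)$ is an $R$-linear automorphism; and (iii) observe that these automorphisms generate a subgroup, which is by definition $G^\lambda(R)$. Steps (ii) and (iii) are formal once (i) is secured, so the real content lies in the integral definition of the exponentials.

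For (i), the key input is the local nilpotence of the Chevalley generators on $V^\lambda$, already flagged in the text. Since $V^\lambda$ is a highest weight module with highest weight $\lambda$, all of its weights lie in $\lambda-\sum_i\Z_{\geq 0}\alpha_i$; as $e_i$ raises the weight by $\alpha_i$, iterating $\rho(e_i)$ on any weight vector eventually leaves the support of $V^\lambda$, so $\rho(e_i)$ is locally nilpotent. Because $\lambda$ is dominant integral, $V^\lambda$ is integrable, which is exactly the assertion that $\rho(f_i)$ is locally nilpotent as well. Consequently, applied to any $v$ in a fixed weight space $V^\lambda_{\mu,R}$, the series $\exp(\rho(se_i))=\sum_{m\geq 0} s^m\,\rho\!\left(\tfrac{e_i^m}{m!}\right)$, together with its $f_i$-analogue, has only finitely many nonzero terms; it is thus a finite sum and no convergence issue arises.

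To make this well-defined over a general $R$ one cannot literally divide by $m!$, and here the $\Z$-form $\mathcal{U}_\Z$ does the real work: the divided powers $\tfrac{e_i^m}{m!}$ and $\tfrac{f_i^m}{m!}$ lie in $\mathcal{U}_\Z$ and preserve the lattice $V^\lambda_\Z$, so each $\chi_{\pm\alpha_i}(s)$ is defined by base change on $V^\lambda_R=R\otimes_\Z V^\lambda_\Z$ and is manifestly $R$-linear. Invertibility and the one-parameter property then follow termwise from the divided-power (Vandermonde) relation $\tfrac{e_i^a}{a!}\tfrac{e_i^b}{b!}=\binom{a+b}{a}\tfrac{e_i^{a+b}}{(a+b)!}$ together with the binomial theorem, which gives $\chi_{\alpha_i}(s)\,\chi_{\alpha_i}(s')=\chi_{\alpha_i}(s+s')$ and in particular $\chi_{\alpha_i}(s)\,\chi_{\alpha_i}(-s)=\mathrm{id}$; hence $\chi_{\pm\alpha_i}(s)\in\Aut(V^\lambda_R)$. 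Step (iii) is then immediate, and one notes as a byproduct that the elements $\widetilde{w}_{\alpha_i}(t)$ and $h_{\alpha_i}(t)$, being words in the $\chi_{\pm\alpha_i}$, automatically lie in $G^\lambda(R)$.

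The only genuinely substantive point, and the step I expect to be the main obstacle, is the integral well-definedness over an arbitrary commutative $R$ with no characteristic assumptions: one must know that the divided-power exponentials preserve the $\Z$-lattice and therefore descend to honest $R$-linear maps after base change. This is precisely where the hyperalgebra $\mathcal{U}_\Z$ and the $\mathcal{U}_\Z$-stability of $V^\lambda_\Z$, both recorded earlier in this section, are indispensable; everything else in the argument is formal manipulation of the divided powers and local nilpotence.
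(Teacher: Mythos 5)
Your proposal is correct and takes essentially the same approach as the paper: the paper states this theorem as a summary of the construction developed in that section (the $\Z$--form $\mathcal{U}_{\Z}$ with divided powers, the lattice $V^{\lambda}_{\Z}=\mathcal{U}_{\Z}\cdot v_{\lambda}$ base--changed to $R$, and the remark that the $\chi_{\pm\alpha_i}$ lie in $Aut(V^{\lambda}_{R})$ ``thanks to the local nilpotence of $e_i$, $f_i$''), deferring full details to [Ca]. Your write-up supplies exactly those details --- stability of the lattice under divided powers, local nilpotence from the weight structure and integrability, and the one--parameter identity giving invertibility --- so it is a faithful elaboration of the paper's argument rather than a different route.
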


A similar construction for $G^{\lambda}$ over arbitrary fields was used in [CG] to construct Kac--Moody groups over finite fields.

When $R=\C$ (or $R=\Q$), we define the integral subgroup $G^{\lambda}(\Z)$ to be the group 
$$G^{\lambda}(\Z)=\langle\chi_{\a_i}(s),\chi_{-\a_i}(t)\mid s,t\in\Z,\ i\in I\rangle.$$
It is non--trivial to prove that this group coincides with the `Chevalley group' over $\Z$, namely the subgroup of  $G^{\lambda}(\C)$ preserving $V^{\lambda}(\Z)$. This is proven in [Ca].  The following was also proven in [Ca]. 

\begin{theorem} ([Ca]) As a subgroup of $G^{\lambda}(\C)$ (or  $G^{\lambda}(\Q)$) the group $G^{\lambda}(\Z)$ has the following  generating sets:

(1) $\chi_{\alpha_i}(1)$ and $\chi_{-\alpha_i}(1)$, 

and

(2) $\chi_{\alpha_i}(1)$ and $\widetilde{w}_{\alpha_i}(1)=\chi_{\alpha_i}(1)\chi_{-\alpha_i}(-1)\chi_{\alpha_i}(1)$.
\end{theorem}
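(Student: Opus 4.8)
The plan is to establish both generating sets by reducing everything to two elementary facts: the additivity of the real root one--parameter subgroups and the defining word for the Weyl lift $\widetilde{w}_{\alpha_i}(1)$. Since $G^{\lambda}(\Z)$ is defined as the group generated by $\{\chi_{\alpha_i}(s),\chi_{-\alpha_i}(t)\mid s,t\in\Z,\ i\in I\}$, it suffices in each case to express every one of these defining generators as a word in the proposed smaller generating set.

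First I would record the additivity law. Because $e_i$ commutes with itself and acts locally nilpotently on $V^{\lambda}$, the exponential $\chi_{\alpha_i}(s)=\exp(\rho(se_i))=\sum_{m\geq0}s^m\rho(e_i^m/m!)$ is a well--defined element of $\Aut(V^{\lambda}_{\Z})$ for each $s\in\Z$, and the two commuting exponentials multiply to give
$$\chi_{\alpha_i}(s)\,\chi_{\alpha_i}(t)=\exp\bigl(\rho((s+t)e_i)\bigr)=\chi_{\alpha_i}(s+t),$$
and likewise for $\chi_{-\alpha_i}$. Hence $s\mapsto\chi_{\alpha_i}(s)$ is a homomorphism out of $(\Z,+)$, so $\chi_{\alpha_i}(s)=\chi_{\alpha_i}(1)^{s}$ for every $s\in\Z$ (with $\chi_{\alpha_i}(1)^{-1}=\chi_{\alpha_i}(-1)$), and similarly $\chi_{-\alpha_i}(t)=\chi_{-\alpha_i}(1)^{t}$. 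Each real root group is therefore cyclic, generated by $\chi_{\pm\alpha_i}(1)$, which immediately yields generating set (1).

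For generating set (2) I would invert the defining word for the Weyl lift. Starting from
$$\widetilde{w}_{\alpha_i}(1)=\chi_{\alpha_i}(1)\,\chi_{-\alpha_i}(-1)\,\chi_{\alpha_i}(1)$$
and using additivity in the form $\chi_{\alpha_i}(1)^{-1}=\chi_{\alpha_i}(-1)$, one solves
$$\chi_{-\alpha_i}(-1)=\chi_{\alpha_i}(-1)\,\widetilde{w}_{\alpha_i}(1)\,\chi_{\alpha_i}(-1),$$
so that $\chi_{-\alpha_i}(1)=\chi_{-\alpha_i}(-1)^{-1}$ is a word in $\chi_{\alpha_i}(1)$ and $\widetilde{w}_{\alpha_i}(1)$. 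By the first step every $\chi_{-\alpha_i}(t)$ is then a power of $\chi_{-\alpha_i}(1)$ and every $\chi_{\alpha_i}(s)$ a power of $\chi_{\alpha_i}(1)$, so $\{\chi_{\alpha_i}(1),\widetilde{w}_{\alpha_i}(1)\mid i\in I\}$ recovers all of the defining generators, giving (2).

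The only point requiring genuine care, and the main obstacle, is justifying the exponentials and the additivity law at the integral level rather than merely in $G^{\lambda}(\C)$: one must verify that each divided power $\rho(e_i^m/m!)$ and $\rho(f_i^m/m!)$ preserves the lattice $V^{\lambda}_{\Z}$ (which holds precisely because $\mathcal{U}_{\Z}$ is generated by these divided powers) and that the defining series terminate on every vector by local nilpotence of $e_i$ and $f_i$, so that all the manipulations above take place inside $G^{\lambda}(\Z)\leq\Aut(V^{\lambda}_{\Z})$. Once the real root subgroups are known to lie in $\Aut(V^{\lambda}_{\Z})$ and to be cyclic, both generating--set claims are formal.
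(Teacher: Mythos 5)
Your argument is correct for the statement as this paper sets it up, and the paper itself offers no proof to compare against: the theorem is imported from [Ca] (listed as in preparation), so the only basis for evaluation is the definition given just above it, namely $G^{\lambda}(\Z)=\langle\chi_{\alpha_i}(s),\chi_{-\alpha_i}(t)\mid s,t\in\Z,\ i\in I\rangle$ viewed inside $G^{\lambda}(\C)$. With that definition your two steps do the whole job: additivity of the one--parameter root subgroups gives $\chi_{\pm\alpha_i}(s)=\chi_{\pm\alpha_i}(1)^{s}$, which is generating set (1), and inverting the defining word gives
$$\chi_{-\alpha_i}(1)=\bigl(\chi_{\alpha_i}(-1)\,\widetilde{w}_{\alpha_i}(1)\,\chi_{\alpha_i}(-1)\bigr)^{-1}=\chi_{\alpha_i}(1)\,\widetilde{w}_{\alpha_i}(1)^{-1}\,\chi_{\alpha_i}(1),$$
which is generating set (2). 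Your care about the integral structure is also well placed and correctly resolved: the divided powers $e_i^m/m!$, $f_i^m/m!$ lie in $\mathcal{U}_{\Z}$, so the (locally finite) exponential series with integer parameters preserve $V^{\lambda}_{\Z}$, and the inverses $\chi_{\pm\alpha_i}(-s)$ do too, so everything happens in $\Aut(V^{\lambda}_{\Z})$.

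One caveat you should flag explicitly. The sentence preceding the theorem notes that the nontrivial result of [Ca] is that this generator--defined group coincides with the ``Chevalley group over $\Z$,'' i.e.\ the full subgroup of $G^{\lambda}(\C)$ preserving the lattice $V^{\lambda}_{\Z}$. If the theorem is read with that stabilizer definition of $G^{\lambda}(\Z)$ --- the reading under which it is a substantive theorem rather than a formal manipulation --- then your proof does not suffice: it only shows that the two proposed sets generate the same subgroup as the defining generators, and says nothing about why every lattice--preserving automorphism is a word in them. That question is of the same nature as proving that $\SL_2(\Z)$ is generated by its two elementary matrices, which needs a Euclidean--algorithm style argument, not just the one--parameter subgroup law. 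So: complete under the paper's stated definition, but missing the entire difficulty under the alternative definition that [Ca] is actually concerned with.
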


We now take  $\lam$ to be a regular weight, and 
we define the \emph{weight topology} on $G^{\lam}(R)$
by taking stabilizers of elements of $V^{\lam}_{R}$ as a sub--base of
neighborhoods of the identity. The completion of $G^{\lam}(R)$
in this topology will be referred to as the \emph{Carbone--Garland completion}
and denoted by $\Ghat^{\lam}(A)$. Since $G^{\lam}(R)$ is a homomorphic image of
$G(R)$, we can think of $\Ghat^{\lam}(R)$ as a completion of $G(R)$ rather than $G^{\lam}(R)$.


\section{Uniqueness of representation--theoretic  Kac--Moody groups over $\Z$}

For $G$ simply laced and hyperbolic, we consider a map from the finite presentation of $G(R)$ of [AC]  to the representation--theoretic group $G^{\lambda}(R)$,  over a commutative ring $R$ with 1.

We define the map $\rho_{\lambda}$:

\vspace{-0.4cm}
{
\[
	\begin{CD}
	G(R)  @> \rho_{\lambda} >>   G^\lambda(R)
	\end{CD}
\]}

from generators of $G(R)$ to generators of $G^\lambda(R)$:

 {$$\quad\qquad  X_i(t)     \mapsto  \chi_{\alpha_i}(t)  $$
$$\quad\qquad    S_i   \mapsto  \widetilde{w}_{\alpha_i}$$
$$\quad\qquad \widetilde{h}_i(u)    \mapsto h_{\alpha_i}(u)$$}
for $t\in{R}$, $u\in R^{\times}$, $i\in\{1,\dots ,\ell\}$. 

\bigskip
\begin{theorem} When $R=\Z$, this map extends to a group homomorphism
$$\rho_{\lambda,\Z}: G(\Z)  \to   G^{\lambda}(\Z).$$
\end{theorem}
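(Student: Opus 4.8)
### Proof proposal

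The plan is to verify that the assignment $X_i(t)\mapsto\chi_{\alpha_i}(t)$, $S_i\mapsto\widetilde{w}_{\alpha_i}$, $\widetilde{h}_i(u)\mapsto h_{\alpha_i}(u)$ extends to a homomorphism by checking that every defining relation of the finite presentation of $G(\Z)$ in Table~2 holds in $G^{\lambda}(\Z)\le\Aut(V^\lambda_\Z)$. Since $G(\Z)$ is given by that finite presentation, establishing that the images of the generators satisfy relations $(1)$ through $(12)$ is exactly what is needed for $\rho_{\lambda,\Z}$ to be well defined; the universal property of a presentation then supplies the homomorphism. So the proof reduces to twelve verifications, carried out entirely inside the exponentiated representation.

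First I would record the elementary identities that make the individual relations transparent. The generators $X_i=X_i(1)$ map to $\chi_{\alpha_i}(1)=\exp(\rho(e_i))$, and by local nilpotence of $\rho(e_i)$ one has the additivity $\chi_{\alpha_i}(s)\chi_{\alpha_i}(t)=\chi_{\alpha_i}(s+t)$ from the exponential series, giving relation $(2)$ of Table~1 and hence the unscaled form; over $\Z$ this is the statement $\chi_{\alpha_i}(u)=\chi_{\alpha_i}(1)^u$, so $X_i(u)\mapsto\chi_{\alpha_i}(t)$ is forced to be consistent with $X_i(u)=X_i^u$. The images $\widetilde{w}_{\alpha_i}=\chi_{\alpha_i}(1)\chi_{-\alpha_i}(-1)\chi_{\alpha_i}(1)$ are the standard Weyl-group lifts, and for these the key facts are the conjugation formula $\widetilde{w}_{\alpha_i}\,\chi_{\beta}(t)\,\widetilde{w}_{\alpha_i}^{-1}=\chi_{s_i\beta}(\pm t)$ and the order computation $\widetilde{w}_{\alpha_i}^{2}=h_{\alpha_i}(-1)$, $\widetilde{w}_{\alpha_i}^{4}=1$, which give relation $(1)$ immediately and also the sign-flip relations $(8)$ and $(10)$ (conjugation by $\widetilde{w}_{\alpha_i}^{2}=h_{\alpha_i}(-1)$ sends a root vector to its inverse on the relevant root groups). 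Relation $(3)$ is just the definition of $\widetilde{w}_{\alpha_i}$ with $t=1$, and $[S_i^2,X_i]=1$ in $(2)$ follows because $h_{\alpha_i}(-1)$ normalizes $U_{\alpha_i}$ with trivial action on $\alpha_i$ up to sign that squares away.

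Next I would dispatch the two-index relations by splitting into the non-adjacent and adjacent cases according to the Cartan integer $a_{ij}$. For $i\neq j$ non-adjacent ($a_{ij}=0$), the simple roots are orthogonal, so $\rho(e_i)$ and $\rho(e_j)$ commute, $[e_i,f_j]=0$, and relations $(4),(5),(6)$ follow from commuting exponentials and the fact that orthogonal reflections commute. The substantive case is $i\neq j$ adjacent ($a_{ij}=a_{ji}=-1$ in the simply laced situation), giving the braid relation $(7)$ from $s_is_j s_i=s_j s_i s_j$ lifted to the extended Weyl group via the [KP] presentation, and the Chevalley commutator $(11)$, $[\chi_{\alpha_i}(t),\chi_{\alpha_j}(u)]=\chi_{\alpha_i+\alpha_j}(\pm tu)$, together with its rewritten form in terms of $S_i$-conjugation as in the table; relations $(9)$ and $(12)$ are then recastings of the same commutator identity using $\widetilde{w}_{\alpha_i}\chi_{\alpha_j}\widetilde{w}_{\alpha_i}^{-1}=\chi_{\alpha_i+\alpha_j}(\pm1)$.

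The main obstacle is the bookkeeping of \emph{signs} in the structure constants and in the Weyl-group lifts. The relations in Table~2 are stated with definite signs (for instance $[X_i,X_j]=S_i X_j S_i^{-1}$ rather than its inverse, and the precise flip $S_i^2 X_j S_i^{-2}=X_j^{-1}$), and these must match the signs produced by $\exp$ on the Chevalley basis and by the lifts $\widetilde{w}_{\alpha_i}$ in $G^\lambda(\Z)$. Because the [AC] presentation was designed precisely to reproduce Tits' relations, I expect the signs to agree after fixing a compatible Chevalley basis, but verifying that the sign appearing in $\widetilde{w}_{\alpha_i}\chi_{\alpha_j}\widetilde{w}_{\alpha_i}^{-1}$ is the one demanded by relations $(9)$--$(12)$, and confirming that the braid relation $(7)$ lifts without an extra factor in $H^\lambda$, is where the real care is needed. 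This is safe over $\Z$ because $X_i(u)=X_i^u$ eliminates the scalar parameters, so only finitely many sign identities on the simple and adjacent-sum root vectors have to be checked rather than a family parametrized by ring elements.
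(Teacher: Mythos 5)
Your proposal follows essentially the same route as the paper: it verifies each of the twelve relations of Table~2 in $G^{\lambda}(\Z)$ using the conjugation formula $\widetilde{w}_{\alpha_i}\chi_{\alpha_j}\widetilde{w}_{\alpha_i}^{-1}=\chi_{w_{\alpha_i}(\alpha_j)}$, the Kac--Peterson presentation of the extended Weyl group (giving $\widetilde{w}_{\alpha_i}^{4}=1$ and the braid/commuting relations), conjugation by $\widetilde{w}_{\alpha_i}^{2}=h_{\alpha_i}(-1)$ for the sign-flip relations, and the Chevalley commutator for adjacent simple roots (where the paper additionally checks prenilpotence and that $m=n=1$ is the only contributing term, and that $2\alpha_i+\alpha_j$ is not a root). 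The sign bookkeeping you flag as the main remaining obstacle is dispatched in the paper by the observation that the [AC] presentation holds formally for any choice of signs of the structure constants, so the verification need only be carried out up to sign.
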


\medskip
{\it Remark.} The existence of an abstract group homomorphism from Tits' group $G(\Q)\to G^{\lambda}(\Q)$ and its restriction $G(\Z)\to G^{\lambda}(\Z)$ can be deduced from [Ti] and [CER]. However, we are interested only in this homomorphism from the finite presentation of $G(\Z)$ in [AC] and in determining the generators of its kernel. Thus we prove that the relations in the finite presentation of $G(\Z)$ are satisfied in $G^{\lambda}(\Z)$.

\medskip
\begin{proof} The presentation in [AC] for $G(\Z)$ does not make signs of structure constants explicit, but rather holds formally for any choice of sign of the structure constants. The known relations in $G^{\lambda}(\Z)$ also hold up to a sign in the structure constants.  Hence we verify that $\rho_{\lambda,\Z}$ is a homomorphism  up to a sign in the structure constants. From now on, we will write $\rho_{\lambda}$ to denote $\rho_{\lambda,\Z}$ if the context allows.

We  show that the relations in $G(\Z)$ are satisfied by their images in $G^{\lambda}(\Z)$ under $\rho_{\lambda}$. We refer to the labelling of the relations in Table 2.

\medskip
(1) Consider the image of $S_i^4=1$ under $\rho_{\lambda}$.  For any $i$ we have:
$$\rho_\lambda(S_i^4)=\widetilde{w}_{\alpha_i}^4.$$
Since the $\widetilde{w}_{\alpha_i}$ generate the extended Weyl group $\widetilde{W}$  and $\widetilde{w}_{\alpha_i}^4=1$ in $\widetilde{W}$, it follows that $$\rho_\lambda(S_i^4)=\widetilde{w}_{\alpha_i}^4=1.$$ Thus, the image of the relation $S_i^4=1$ is satisfied in $G^\lambda(\Z)$.

\medskip
(2) In $G^\lambda(\Z)$, we have
$$\widetilde{w}_{\alpha_i}\chi_{\a_j}\widetilde{w}_{\alpha_i}^{-1}=\chi_{w_{\alpha_i}(\a_j)}$$ for any pair $i,j$.
For each $i$ we have $w_{\a_i}(\a_i)=-\a_i.$
Hence
 $$\widetilde{w}_{\alpha_i}\chi_{\a_i}\widetilde{w}_{\alpha_i}^{-1}=\chi_{-\a_i}.$$
Consider now the image of $[S_i^2,X_i]$ in $G^\lambda(\Z)$:
$$\rho_\lambda([S_i^2,X_i])=\rho_\lambda(S_iS_iX_iS_i^{-1}S_i^{-1}X_i^{-1})=\widetilde{w}_{\a_i}\widetilde{w}_{\a_i}\chi_{\a_i}\widetilde{w}_{\a_i}^{-1}\widetilde{w}_{\a_i}^{-1}\chi_{\a_i}^{-1}.$$
This reduces to $$\rho_\lambda([S_i^2,X_i])=\widetilde{w}_{\a_i}(\widetilde{w}_{\a_i}\chi_{\a_i}\widetilde{w}_{\a_i}^{-1})\widetilde{w}_{\a_i}^{-1}\chi_{\a_i}^{-1}=\widetilde{w}_{\a_i}\chi_{-\a_i}\widetilde{w}_{\a_i}^{-1}\chi_{\a_i}^{-1}=\chi_{\a_i}\chi_{\a_i}^{-1}=1.$$
Thus the image of the relation $[S_i^2,X_i]=1$ is satisfied in $G^\lambda(\Z)$.

\medskip
(3) Similarly, consider the image of  $X_iS_iX_iS_i^{-1}X_i$. Applying $\rho_{\lambda}$, we have
$$\rho_\lambda(X_iS_iX_iS_i^{-1}X_i)=\chi_{\a_i}\widetilde{w}_{\a_i}\chi_{\a_i}\widetilde{w}_{\a_i}^{-1}\chi_{\a_i}=\chi_{\a_i}\chi_{-\a_i}\chi_{\a_i}.$$
This last term equals $\widetilde{w}_{\a_i}$, so that $$\rho_\lambda(X_iS_iX_iS_i^{-1}X_i)=\widetilde{w}_{\a_i}=\rho_\lambda(S_i).$$
Thus, the image of the relation $S_i=X_iS_iX_iS_i^{-1}X_i$ is satisfied in $G^\lambda(\Z)$.

\medskip
(4) Suppose now that $i\neq j$ are not adjacent on the Dynkin diagram. Using the presentation of  $\widetilde{W}$ in [KP], we have $$\widetilde{w}_{\a_i}\widetilde{w}_{\a_j}\widetilde{w}_{\a_i}^{-1}\widetilde{w}_{\a_j}^{-1}=1.$$
Hence, $\widetilde{w}_{\a_i}\widetilde{w}_{\a_j}=\widetilde{w}_{\a_j}\widetilde{w}_{\a_i}$, so that $\rho_\lambda(S_iS_j)=\rho_\lambda(S_jS_i)$.
Thus the image of the relation $S_iS_j=S_jS_i$ for $i\neq j$, $i,j$  not adjacent on the Dynkin diagram, is satisfied in $G^\lambda(\Z)$.

\medskip
(5) Consider the image of $[S_i,X_j]$ under $\rho_\lambda$: $$\rho_\lambda([S_i,X_j])=\rho_\lambda(S_iX_jS_i^{-1}X_j^{-1})=\widetilde{w}_{\a_i}\chi_{\a_j}\widetilde{w}_{\a_i}^{-1}\chi_{\a_j}^{-1}=\chi_{w_{\a_i}(\a_j)}\chi_{\a_j}^{-1}.$$
In this last term, we evaluate $w_{\a_i}(\a_j)$: $$w_{\a_i}(\a_j)=\a_j-\a_j(\a_i^{\vee})\a_i=\a_j-(a_{ij})\a_i.$$
However, for $i\neq j$, $i,j$ not adjacent on the Dynkin diagram, we have $a_{ij}=0$, so that $$w_{\a_i}(\a_j)=\a_j-(a_{ij})\a_i=\a_j.$$
If $i\neq j$ are not adjacent, then $$\rho_\lambda([S_i,X_j])=\chi_{\a_j}\chi_{\a_j}^{-1}=1.$$
Thus, the image of $[S_i,X_j]=1$ for $i\neq j$, $i,j$ not adjacent on the Dynkin diagram is satisfied in $G^\lambda(\Z)$.

\medskip
(6) For $i\neq j$, and $i,j$  not adjacent, we may compute in the rank 2 root subsystem of type $A_1 \times A_1$. We have  $[\chi_{\a_i},\chi_{\a_j}]=1$, so that $\rho_\lambda([X_i,X_j])=1$. Thus the image of the relation $[X_i,X_j]=1$ for $i\neq j$ not adjacent, is satisfied in $G^\lambda(\Z)$.

\medskip
(7) We now suppose that $i\neq j$ are adjacent on the Dynkin diagram. Using the presentation of $\widetilde{W}$ in [KP], we have
 $$\widetilde{w}_{\a_i}\widetilde{w}_{\a_j}\widetilde{w}_{\a_i}\widetilde{w}_{\a_j}^{-1}\widetilde{w}_{\a_i}^{-1}\widetilde{w}_{\a_j}^{-1}=1.$$
Hence, $\widetilde{w}_{\a_i}\widetilde{w}_{\a_j}\widetilde{w}_{\a_i}=\widetilde{w}_{\a_j}\widetilde{w}_{\a_i}\widetilde{w}_{\a_j}$, so that $$\rho_\lambda(S_iS_jS_i)=\widetilde{w}_{\a_i}\widetilde{w}_{\a_j}\widetilde{w}_{\a_i}=\widetilde{w}_{\a_j}\widetilde{w}_{\a_i}\widetilde{w}_{\a_j}=\rho_\lambda(S_jS_iS_j).$$
Thus the image of the relation $S_iS_jS_i=S_jS_iS_j$ for $i\neq j$, $i,j$ adjacent is satisfied in $G^\lambda(\Z)$.

\medskip
(8) We have the relation $\widetilde{w}_{\a_j}\widetilde{w}_{\a_i}^2\widetilde{w}_{\a_j}^{-1}=\widetilde{w}_{\a_i}^2(\widetilde{w}_{\a_j})^{-2a_{ij}}$ in $G^\lambda(\Z)$ (see [CER]). If $i\neq j$ are adjacent, then $a_{ij}=-1$. Hence $$\widetilde{w}_{\a_j}\widetilde{w}_{\a_i}^2\widetilde{w}_{\a_j}^{-1}=\widetilde{w}_{\a_i}^2\widetilde{w}_{\a_j}^2.$$
Multiplying by $\widetilde{w}_{\a_j}$ on both sides yields  $\widetilde{w}_{\a_j}\widetilde{w}_{\a_i}^2=\widetilde{w}_{\a_i}^2\widetilde{w}_{\a_j}^3.$ Since  $\widetilde{w}_{\a_i}^4=1$ for all $i$ in $G^\lambda({\Z})$, we can rewrite the right hand side as $\widetilde{w}_{\a_i}^2\widetilde{w}_{\a_j}^{-1}$.
We have $\widetilde{w}_{\a_i}^2=\widetilde{w}_{\a_i}^{-2}$ for all $i$, so $\widetilde{w}_{\a_j}\widetilde{w}_{\a_i}^{-2}=\widetilde{w}_{\a_i}^{-2}\widetilde{w}_{\a_j}^{-1}$. Finally, multiplying by $\widetilde{w}_{\a_i}^2$ on both sides gives $$\rho_\lambda(S_i^2S_jS_i^{-2})=\widetilde{w}_{\a_i}^2\widetilde{w}_{\a_j}\widetilde{w}_{\a_i}^{-2}=\widetilde{w}_{\a_j}^{-1}=\rho_\lambda(S_j^{-1}).$$ Thus, the image of the relation $S_i^2S_jS_i^{-2}=S_j^{-1}$ for $i,j$ adjacent is satisfied in $G^\lambda(\Z)$.

\medskip
(9) Now consider the words $\widetilde{w}_{\a_i}\chi_{\a_j}\widetilde{w}_{\a_i}^{-1}$ and $\widetilde{w}_{\a_j}^{-1}\chi_{\a_i}\widetilde{w}_{\a_j}$ in $G^\lambda(\Z)$. As before, we have
$$\widetilde{w}_{\alpha_i}\chi_{\a_j}\widetilde{w}_{\alpha_i}^{-1}=\chi_{w_{\a_i}(\a_j)}$$ for any $i,j$. Using the relation $\widetilde{w}_{\a_i}^4=1$ for all $i$, we may rewrite the second word above as $$\widetilde{w}_{\a_j}^{-1}\chi_{\a_i}\widetilde{w}_{\a_j}=\widetilde{w}_{\a_j}^3\chi_{\a_i}\widetilde{w}_{\a_j}^{-3}.$$
For $i,j$ adjacent on the Dynkin diagram, $a_{ij}=-1$, so 
$$w_{\a_j}(\a_i)=\a_i-\a_i(\a_j^{\vee})\a_j=\a_i-(a_{ij})\a_j=\a_i+\a_j.$$
Thus
$$\widetilde{w}_{\a_j}\chi_{\a_i}\widetilde{w}_{\a_j}^{-1}=\chi_{w_{\a_j}(\a_i)}=\chi_{\a_i+\a_j}.$$
Similarly, $$w_{\a_i}(\a_j)=\a_j-\a_j(\a_i^{\vee})\a_i=\a_j-(a_{ji})\a_i=\a_j+\a_i=\a_i+\a_j.$$
We may simplify the first word: $$\widetilde{w}_{\alpha_i}\chi_{\a_j}\widetilde{w}_{\alpha_i}^{-1}=\chi_{\a_i+\a_j}$$ and the second word: $$\widetilde{w}_{\a_j}^{-1}\chi_{\a_i}\widetilde{w}_{\a_j}=\widetilde{w}_{\a_j}^3\chi_{\a_i}\widetilde{w}_{\a_j}^{-3}=\widetilde{w}_{\a_j}^2(\widetilde{w}_{\a_j}\chi_{\a_i}\widetilde{w}_{\a_j})\widetilde{w}_{\a_j}^{-2}=\widetilde{w}_{\a_j}^2\chi_{\a_i+\a_j}\widetilde{w}_{\a_j}^{-2}.$$
This yields
 $$\widetilde{w}_{\a_j}^{-1}\chi_{\a_i}\widetilde{w}_{\a_j}=\widetilde{w}_{\a_j}^2\chi_{\a_i+\a_j}\widetilde{w}_{\a_j}^{-2}=\widetilde{w}_{\a_j}(\widetilde{w}_{\a_j}\chi_{\a_i+\a_j}\widetilde{w}_{\a_j}^{-1})\widetilde{w}_{\a_j}^{-1}=\widetilde{w}_{\a_j}\chi_{w_{\a_j}(\a_i+\a_j)}\widetilde{w}_{\a_j}^{-1}.$$
We have $$w_{\a_j}(\a_i+\a_j)=w_{\a_j}(\a_i)+w_{\a_j}(\a_j).$$
Since $w_{\a_j}(\a_i)=\a_i+\a_j$ and $w_{\a_j}(\a_j)=-a_j$, we have $$w_{\a_j}(\a_i+\a_j)=a_i.$$
Hence $$\widetilde{w}_{\a_j}^{-1}\chi_{\a_i}\widetilde{w}_{\a_j}=\widetilde{w}_{\a_j}\chi_{w_{\a_j}(\a_i+\a_j)}\widetilde{w}_{\a_j}^{-1}=\widetilde{w}_{\a_j}\chi_{\a_i}\widetilde{w}_{\a_j}^{-1}$$
which yields $$\widetilde{w}_{\a_j}^{-1}\chi_{\a_i}\widetilde{w}_{\a_j}=\widetilde{w}_{\a_j}\chi_{\a_i}\widetilde{w}_{\a_j}^{-1}=\chi_{w_{\a_j}(\a_i)}=\chi_{\a_i+\a_j}.$$
Hence $$\widetilde{w}_{\a_j}\chi_{\a_i}\widetilde{w}_{\a_j}^{-1}=\chi_{\a_i+\a_j}=\widetilde{w}_{\a_j}^{-1}\chi_{\a_i}\widetilde{w}_{\a_j}$$

Multiplying by $\widetilde{w}_{\a_i}$ and then $\widetilde{w}_{\a_j}$ on both sides gives the equality $\widetilde{w}_{\a_j}\widetilde{w}_{\a_i}\chi_{\a_j}=\chi_{\a_i}\widetilde{w}_{\a_j}\widetilde{w}_{\a_i}$, which is the image of $X_iS_jS_i=S_jS_iX_j$ under $\rho_\lambda$. Thus, the image of the relation $X_iS_jS_i=S_jS_iX_j$ is satisfied in $G^\lambda(\Z)$.

\medskip
(10) To determine the image of $S_i^2X_jS_i^{-2}$ under $\rho_\lambda$, note that $S_i^2=\widetilde{h}_i(-1)$, so that $$\rho_\lambda(S_i^2X_jS_i^{-2})=\rho_\lambda(\widetilde{h}_i(-1)X_j\widetilde{h}_i(-1)^{-1})=h_i(-1)\chi_{\a_j}(1)h_i(-1)^{-1}.$$
We have the relation $h_i(u)\chi_\a(v)h_i(u)^{-1}=\chi_\a(vu^{a_{ij}})$ in $G^\lambda(\Z)$ for any $i,j$ ([CER]). Hence, for $i,j$ adjacent, we see that $$\rho_\lambda(S_i^2X_jS_i^{-2})=h_i(-1)\chi_{\a_j}(1)h_i(-1)^{-1}=\chi_{\a_j}(-1)=\chi_{\a_j}^{-1}=\rho_\lambda(X_j)^{-1}.$$
Thus, the image of the relation $S_i^2X_jS_i^{-2}=X_j^{-1}$ for $i,j$ adjacent is satisfied in $G^\lambda(\Z)$.

\medskip
(11) For any prenilpotent pair of real roots $\alpha$ and $\beta$, by [Ti] and [CG] there is a Chevalley relation in $G^{\lambda}$ of the form

$$[\chi_{\alpha}(u),\chi_{\beta}(v)]
	=
	\prod_{\begin{matrix} m\alpha+n\beta\in\Delta^{re}_+\\m,n> 0\end{matrix}
	}
	\chi_{m\alpha+n\beta}(C_{mn\alpha\beta}u^m v^n)
$$
between root groups $U_{\alpha}$ and $U_{\beta}$. Here $u,v\in R$, $C_{mn\alpha\beta}$ are integers (structure constants). 
We claim that  adjacent simple roots $\alpha_i$ and $\alpha_j$ form a prenilpotent pair. By Lemma 6 of [AC], in the simply laced hyperbolic case, a pair of real roots is prenilpotent if $(\alpha,\beta)\geq -1$. But for adjacent simple roots $\alpha_i$ and $\alpha_j$, 
 $(\alpha_i,\alpha_j)=-1$. Thus the pair $\alpha_i,\alpha_j$ is prenilpotent and we have $C_{mn\alpha\beta}\in\{\pm 1\}$. We have 
$$(m\a_i+n\a_j \ | \ m\a_i+n\a_j)=m^2(\a_i \ | \ \a_i)+2mn(\a_i \ | \ \alpha_j)+n^2(\a_j \ | \ \alpha_j)=2m^2-2mn+2n^2.$$
But  $(\cdot \ | \ \cdot)=2$ for all elements of $\Delta^{re}$. To determine when $m\a_i+n\a_j \in\Delta^{re}$ for $m,n>0$, we seek non--negative solutions of 
$$2m^2-2mn+2n^2=2(m^2-mn+n^2)=2$$
or $$m^2-mn+n^2=1.$$ It is easy to see that the only solution is
$$m=n=1.$$
Thus the right hand side of the Chevalley commutation relation is $$\chi_{\a_i+\a_i}=\chi_{w_{\a_i}(\a_j)}=\widetilde{w}_{\a_i}\chi_{\a_j}\widetilde{w}_{\a_i}^{-1}$$ and
the image of the relation $[X_i,X_j]=S_iX_jS_i^{-1}$ for $i,j$ adjacent is satisfied in $G^\lambda(\Z)$.

\medskip
(12) Finally, consider $\rho_\lambda([X_i,S_iX_jS_i^{-1}])$ for $i,j$ adjacent. We have $$\rho_\lambda([X_i,S_iX_jS_i^{-1}])=[\rho_\lambda(X_i),\rho_\lambda(S_iX_jS_i^{-1})]=[\chi_{\alpha_i},\widetilde{w}_{\alpha_i}\chi_{\alpha_j}\widetilde{w}_{\alpha_i}^{-1}].$$
We have $\widetilde{w}_{\alpha_i}\chi_{\alpha_j}\widetilde{w}_{\alpha_i}^{-1}=\chi_{w_{\a_i}(\alpha_j)}$. As $i$ and $j$ are adjacent, $a_{ij}=-1$, thus it follows that   $w_{\a_i}(\alpha_j)=\alpha_i+\alpha_j$, so that $$\rho_\lambda([X_i,S_iX_jS_i^{-1}]=[\chi_{\alpha_i},\widetilde{w}_{\alpha_i}\chi_{\alpha_j}\widetilde{w}_{\alpha_i}^{-1}]=[\chi_{\alpha_i},\chi_{\alpha_i+\alpha_j}].$$
We note that  $[\chi_{\alpha_i},\chi_{\alpha_i+\alpha_j}]=1$ when $2\alpha_i+\alpha_j$ is not a root.
We have $$(2\a_i+\a_j \ | \ 2\a_i+\a_j)=4(\a_i \ | \ \a_i)+2(\a_i \ | \ \alpha_j)+2(\a_j \ | \ \alpha_i)+(\a_j \ | \ \alpha_j)=8-2-2+2=6.$$
As $(\cdot \ | \ \cdot)\leq 0$ for all elements of $\Delta^{im}$ and $(\cdot \ | \ \cdot)=2$ for all elements of $\Delta^{re}$, it follows that \linebreak $2\alpha_i+\alpha_j\not\in\Delta^{im}$ and $2\alpha_i+\alpha_j\not\in\Delta^{re}$, thus $2\alpha_i+\alpha_j$ is not a root.
Thus $$\rho_\lambda([X_i,S_iX_jS_i^{-1}])=[\chi_{\alpha_i},\chi_{\alpha_i+\alpha_j}]=1$$ and the image of the relation $[X_i,S_iX_jS_i^{-1}]=1$ for $i,j$ not adjacent, is satisfied in $G^\lambda(\Z)$.

\end{proof}

\newpage
\bigskip\section{The kernel of  $\rho_{\lambda}$}

When $R=\mathbb{K}$ is a field, R\'emy and Ronan took a completion $\Ghat(\mathbb{K})$ of Tits' Kac--Moody group $G(\mathbb{K})$ in the automorphism group of the building of the BN--pair for $G$ ([RR] and [Mar], Sec 6.1). 

The following relationship between $\widehat{G}^\lam(\mathbb{K})$ and $\Ghat(\mathbb{K})$
was established by [BR] and [Mar], (see also [CER]).

\begin{theorem}\label{kernel} For any regular weight $\lam$, there exists a (canonical)
continuous surjective \linebreak homomorphism $\rho_{\lam}:\widehat{G}^\lam(\mathbb{K})\to\Ghat(\mathbb{K})$.
The kernel  of $\rho_{\lam}$ is equal to $$\bigcap\limits_{g\in\Ghat}g\Bhat^{\lam}g^{-1}$$ and is contained in $Center(\widehat{G}^\lam(\mathbb{K}))$.
\end{theorem}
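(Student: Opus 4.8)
The plan is to construct $\rho_\lam$ and analyze its kernel through the common action of both completions on a single object: the positive building $\mathcal{I}_+$ of the (twin) Tits system attached to $G$. Recall that, by the R\'emy--Ronan construction, $\Ghat(\mathbb{K})$ is \emph{by definition} the closure of the image of $G(\mathbb{K})$ inside $\Aut(\mathcal{I}_+)$ equipped with the topology of pointwise convergence on chambers; the stabilizer of the fundamental chamber is the completed Borel, and $\Ghat$ acts transitively on chambers. On the representation--theoretic side, I would use the Kac--Peterson realization of $\mathcal{I}_+$ as the orbit of the highest--weight line $[v_\lam]$ inside $\mathbb{P}(V^\lam_{\mathbb{K}})$, where regularity of $\lam$ is exactly what guarantees that this orbit is a faithful geometric model of the building. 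Under this realization $G^\lam(\mathbb{K})$ acts on $\mathcal{I}_+$, the stabilizer of the fundamental chamber is $B^\lam$, and its completion in the weight topology is $\Bhat^\lam$.

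First I would produce the map. The key comparison is between the two sub--bases of neighborhoods of the identity: the weight topology on $G^\lam(\mathbb{K})$ has stabilizers of weight vectors as a sub--base, while the topology pulled back from $\Aut(\mathcal{I}_+)$ has pointwise chamber stabilizers as a sub--base. Since each chamber of $\mathcal{I}_+$ is a line spanned by a specific weight vector in the $G^\lam$--orbit of $[v_\lam]$, the stabilizer of a chamber (a line) contains the stabilizer of the corresponding weight vector; hence the weight topology is finer, and the identity map on $G^\lam(\mathbb{K})$ is continuous from the weight topology to the building topology. It therefore extends to a continuous homomorphism $\rho_\lam\colon\widehat{G}^\lam(\mathbb{K})\to\Ghat(\mathbb{K})$. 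For surjectivity I would argue that $\rho_\lam(\widehat{G}^\lam)$ is a complete, hence closed, subgroup of $\Ghat$ containing the dense image of $G^\lam(\mathbb{K})$, and so equals $\Ghat(\mathbb{K})$.

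Next I would identify the kernel. By construction $\ker\rho_\lam$ is precisely the subgroup of $\widehat{G}^\lam$ acting trivially on $\mathcal{I}_+$, i.e.\ fixing every chamber. Because the chamber stabilizers are exactly the conjugates of $\Bhat^\lam$ and $\Ghat$ is chamber--transitive, the pointwise stabilizer of the whole building is
$$\ker\rho_\lam=\bigcap_{g\in\Ghat}g\Bhat^\lam g^{-1},$$
which is the stated description. (Conjugation by $g\in\Ghat$ acts on subgroups of $\widehat{G}^\lam$ unambiguously, since the kernel will be seen to be central.)

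The \textbf{main obstacle is centrality}. The hard part will be to show that an element $z$ fixing $\mathcal{I}_+$ pointwise is central in $\widehat{G}^\lam(\mathbb{K})$, and this is where I would invoke and adapt the arguments of [BR] and [Mar]. Since $z$ lies in every conjugate of $\Bhat^\lam$, it normalizes each real root group $U_\a$, and the relations of the refined Tits system then force $z$ to centralize each $U_\a$. As the real root groups together with the torus topologically generate $\widehat{G}^\lam(\mathbb{K})$, it remains only to control the action of $z$ on the torus: fixing every weight line in the $G^\lam$--orbit, $z$ acts as a scalar on each weight space, and regularity of $\lam$ forces these scalars to be compatible enough that $z$ commutes with the whole torus rather than merely normalizing it. Combining the two, $z$ centralizes a dense subgroup and hence all of $\widehat{G}^\lam(\mathbb{K})$. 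I would transport this computation from [BR], [Mar] to the Carbone--Garland completion using the density of $G^\lam(\mathbb{K})$ and the identification of topologies established above.
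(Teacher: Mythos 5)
The paper itself does not prove this theorem; it is quoted from [BR] and [Mar], so your sketch can only be judged on its own terms. Its architecture (make both completions act on the positive building, compare the weight and chamber topologies, identify the kernel with the pointwise fixator of the building, then prove centrality) is the same as in the cited sources, and your construction of $\rho_{\lam}$ and the identification $\ker\rho_{\lam}=\bigcap_g g\Bhat^{\lam}g^{-1}$ are essentially sound. The first genuine gap is surjectivity: ``the image is complete, hence closed'' is not a valid inference, because a continuous homomorphic image of a complete topological group need not be complete or closed. For example, $\Z$ with the discrete topology is complete, yet its image under the continuous inclusion $\Z\hookrightarrow\Z_p$ is dense and not closed; and the whole point of your setup is that the weight topology is strictly finer than the building topology, so completeness upstairs does not transfer along $\rho_{\lam}$. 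A correct route: since $\mathrm{im}(G)$ is dense in $\Ghat(\mathbb{K})$ and $\Bhat$ is open, every coset $x\Bhat$ meets $\mathrm{im}(G)$, whence $\Ghat(\mathbb{K})=\mathrm{im}(G)\cdot\Bhat$; surjectivity therefore reduces to $\rho_{\lam}(\Bhat^{\lam})\supseteq\Bhat$, i.e.\ essentially to showing that the closure $\Uhat^{\lam}$ of $U^+$ in $\widehat{G}^{\lam}(\mathbb{K})$ maps onto the closure of $U^+$ in $\Ghat(\mathbb{K})$. Over a finite field this follows from compactness of $\Uhat^{\lam}$ (its image is compact, hence closed, and contains a dense subgroup of the target); over general $\mathbb{K}$ it requires comparing the two topologies induced on $U^+$, which is where the real work lies in [Mar] (following [Rou]). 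Your sketch supplies none of this.

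The second gap is the step you yourself flag as the main obstacle. You assert that $z\in\bigcap_g g\Bhat^{\lam}g^{-1}$ ``normalizes each real root group $U_{\a}$,'' but that is exactly what is not known at this stage: the kernel property only gives that $zuz^{-1}$ and $u$ have the same image in $\Ghat(\mathbb{K})$, i.e.\ $zuz^{-1}\in u\cdot\ker\rho_{\lam}$, and nothing yet places $zuz^{-1}$ back inside $U_{\a}$. Moreover ``the relations of the refined Tits system then force $z$ to centralize $U_{\a}$'' is not an argument; normalizing a root group does not imply centralizing it (the torus normalizes every $U_{\a}$). Note that if you could prove $zU_{\a}z^{-1}=U_{\a}$, the finish would be immediate and cleaner than what you wrote: $zuz^{-1}u^{-1}\in U_{\a}\cap\ker\rho_{\lam}\subseteq U_{\a}\cap Center(G)=1$, using that $U_{\a}$ acts faithfully on the building ([RR], 1.B). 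The standard repair instead goes through the structure of $\Bhat^{\lam}$: write $z=hu$ with $h\in H$ and $u\in\Uhat^{\lam}$, use $z\in\widetilde{w}\Bhat^{\lam}\widetilde{w}^{-1}$ for all $w\in W$ together with the decomposition $\Uhat^{\lam}=U_{\a_i}\ltimes\bigl(\Uhat^{\lam}\cap\widetilde{s}_i\Uhat^{\lam}\widetilde{s}_i^{-1}\bigr)$ to kill the root components of $u$ one simple reflection at a time, concluding $u=1$; then $z=h\in H$ fixes the building pointwise, so $h\in Center(G)$ by [RR] 1.B, i.e.\ $\a_i(h)=1$ for all $i$, so $h$ centralizes every $U_{\pm\a_i}$, hence the dense subgroup $G^{\lam}(\mathbb{K})$, hence --- centralizers being closed --- all of $\widehat{G}^{\lam}(\mathbb{K})$. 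Your torus step (``regularity forces the scalars to be compatible enough'') is too vague to carry this weight.
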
 

This result was extended further by [Rou] (see also Prop 6.46 of [Mar]).

\begin{theorem} ([Rou]) If $Char(\mathbb{K})=0$  then $\eps_{\lam}:\widehat{G}^\lam(\mathbb{K})\to\Ghat(\mathbb{K})$ is an isomorphism of topological groups.
\end{theorem}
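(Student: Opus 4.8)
The plan is to deduce the isomorphism directly from Theorem \ref{kernel}, which already supplies a canonical continuous surjection $\eps_{\lam}=\rho_{\lam}:\widehat{G}^\lam(\mathbb{K})\to\Ghat(\mathbb{K})$ whose kernel is the central subgroup $K=\bigcap_{g\in\Ghat}g\Bhat^{\lam}g^{-1}$. Since surjectivity and continuity are already in hand, the statement reduces to two assertions: first, that $K=\{1\}$ when $\mathrm{Char}(\mathbb{K})=0$, so that $\eps_{\lam}$ is bijective; and second, that $\eps_{\lam}$ is open, so that a continuous bijection becomes a homeomorphism and hence an isomorphism of topological groups.

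I would first establish the vanishing of $K$. Because $K$ is central and $V^\lam$ is (essentially) irreducible, every element $z\in K$ acts on $V^\lam_{\mathbb{K}}$ by a scalar: the pro-unipotent subgroups $\Uhat^{\pm}$ intertwine distinct weight spaces, so an element commuting with them cannot scale different weight spaces by different factors. Thus $K$ is a group of scalars lying in the completed torus $\widehat{T}$, and evaluating the scalar on the pair of weights $\lam$ and $\lam-\alpha_i$ forces $\alpha_i(z)=1$ for every simple root $\alpha_i$. The characteristic-zero hypothesis enters decisively here: over such $\mathbb{K}$ one has $\Ucal_{\mathbb{Z}}\otimes_{\mathbb{Z}}\mathbb{K}=\Ucal_{\mathbb{K}}$, the divided powers $e_i^m/m!$ and $f_i^m/m!$ being redundant, so that $G^\lam(\mathbb{K})$ is the full group attached to $\mathfrak{g}_{\mathbb{K}}$ and $V^\lam$ is faithful enough that the only central scalar compatible with the lattice $V^\lam_{\mathbb{Z}}$ and fixing $v_\lam$ is the identity. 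Hence $K=\{1\}$ and $\eps_{\lam}$ is a continuous bijection.

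Next I would prove openness. Both completions are defined by filtrations of the identity: $\Ghat(\mathbb{K})$ by the pointwise stabilizers of larger and larger balls in the building (equivalently, by the associated congruence subgroups), and $\widehat{G}^\lam(\mathbb{K})$ by the stabilizers of finite sets of weight vectors in $V^\lam_{\mathbb{K}}$, which amounts to the standard filtration of $\Uhat^{+}$ by root height. Since $\rho_{\lam}$ carries the real root groups to the real root groups and respects this height filtration, it sends a basic neighbourhood of the identity in $\widehat{G}^\lam(\mathbb{K})$ onto a basic neighbourhood in $\Ghat(\mathbb{K})$; thus $\rho_{\lam}$ is open. A continuous open bijection between complete topological groups is an isomorphism of topological groups, which yields the theorem.

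The main obstacle is the vanishing of the central kernel $K$. Surjectivity and openness are essentially formal once the correspondence of root groups and the matching of the two filtrations are recorded, but the triviality of $K$ is exactly where the characteristic-zero hypothesis is indispensable: in positive characteristic the divided-power structure of $\Ucal_{\mathbb{Z}}$ genuinely enlarges the kernel, and the analogous map need not be injective. Making precise the assertion that $V^\lam$ is ``faithful enough'' in characteristic zero, namely that no nontrivial central scalar is compatible with the integral structure, is the heart of the argument and the step I expect to require the most care.
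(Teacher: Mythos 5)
First, be aware that the paper does not prove this statement at all: it is an external result, quoted from Rousseau [Rou] (see also Prop.~6.46 of [Mar]), whose proof rests on the theory of masures and a detailed comparison of the two completions. So your proposal is not competing with a short argument in the paper; it is attempting to reprove a deep theorem, and it has to be judged by that standard.

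Measured that way, each of your reduction steps has a genuine gap. (a) Triviality of $K=\bigcap_{g}g\Bhat^{\lam}g^{-1}$: your weight-space computation only re-derives what Theorem~\ref{kernel} already asserts, namely that elements of $K$ are central and hence act on $V^{\lam}_{\mathbb{K}}$ by a single scalar. The decisive step --- that this scalar equals $1$ --- is justified by the claims that elements of $K$ ``fix $v_{\lam}$'' and must be ``compatible with the lattice $V^{\lam}_{\Z}$''. Neither claim is available: $\Bhat^{\lam}$, hence $K$, is only required to \emph{scale} $v_{\lam}$, not fix it; and every scalar in $\mathbb{K}^{\times}$ preserves $V^{\lam}_{\mathbb{K}}$, while the lattice $V^{\lam}_{\Z}$ plays no role in the definition of $\Ghat^{\lam}(\mathbb{K})$. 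Indeed, if the underlying Tits group has nontrivial finite centre $Z$ with $\lam(Z)\neq 1$, then the nontrivial scalar operators $\lam(z)\cdot\mathrm{id}$ act trivially on the building; whether such elements obstruct injectivity depends on precisely how the R\'emy--Ronan completion treats the centre (it is built from the image of $G(\mathbb{K})$ in the automorphism group of its building), a point your argument never engages. (b) The mechanism you propose for where $Char(\mathbb{K})=0$ enters --- redundancy of the divided powers making $V^{\lam}$ ``faithful enough'' --- is not logically connected to any step of your argument; in Rousseau's work the characteristic-zero hypothesis is used to control the pro-unipotent parts of the completions (ruling out ``exotic'' elements of $\Uhat^{+}$ beyond the closure of $U^{+}$), not to exclude central scalars. (c) Openness: asserting that $\rho_{\lam}$ ``respects the height filtration'' and so carries basic neighbourhoods onto basic neighbourhoods assumes exactly what must be proven, namely that stabilizers of weight vectors and fixators of balls in the building define comparable filtrations of the identity. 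Continuity (the building topology is coarser than the weight topology) is part of Theorem~\ref{kernel}; the reverse comparison is a theorem, not bookkeeping, and no Baire-category open mapping theorem applies in this generality, since for fields such as $\mathbb{K}=\C$ these completions are neither locally compact nor second countable.
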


\bigskip
From now on, we will take $R=\C$.

\bigskip

\begin{lemma}\label{center} The kernel of the map $\rho_{\lam}: G(\C)\to G^{\lam}(\C)$
lies in $Center(G(\C))$.
\end{lemma}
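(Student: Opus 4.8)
The plan is to show that $K^{\lam}=\Ker(\rho_{\lam})$ sits inside the torus $H$, and then that any normal subgroup of $G(\C)$ contained in $H$ is central; normality of $K^{\lam}$ is automatic, being a kernel. The proof splits into three steps: injectivity of $\rho_{\lam}$ on real root groups, a Bruhat/Birkhoff argument trapping $K^{\lam}$ in $H$, and a commutator computation inside $H$.

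First I would record the key preliminary fact that $\rho_{\lam}$ is injective on each real root group. For a simple root $\a_i$ one has $\rho_{\lam}(\chi_{\a_i}(t))=\exp(\rho(te_i))$, and since $\lam$ is regular the vector $f_iv_{\lam}$ is nonzero, so $e_i$ acts nontrivially on $V^{\lam}$ and $\exp(t\rho(e_i))\neq 1$ for $t\neq 0$. Conjugating by (images of) the extended Weyl group, which realize the correct reflections in $G^{\lam}(\C)$, transfers this to every real root group $U_{\a}$, $\a\in\Delta^{re}$. Hence $K^{\lam}\cap U_{\a}=\{1\}$ for all real $\a$.

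To locate $K^{\lam}$ inside $H$, I would use that $\rho_{\lam}$ respects the twin BN--pair structure: it carries $B_{\pm}$ into $B_{\pm}^{\lam}$, sends $S_i$ to $\widetilde{w}_{\a_i}$, and therefore maps the Bruhat cell $B_+\dot wB_+$ into $B_+^{\lam}\dot wB_+^{\lam}$. If $g\in K^{\lam}$ lies in the cell indexed by $w$, then $\rho_{\lam}(g)=1$ lies in the identity cell of $G^{\lam}(\C)$, so disjointness of Bruhat cells forces $w=e$ and hence $g\in B_+$. Running the same argument with the opposite (Birkhoff) decomposition gives $g\in B_-$, whence $g\in B_+\cap B_-=H$. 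Finally, for $k\in K^{\lam}\subseteq H$ and any real root $\a$ the torus normalizes $U_{\a}$, so $k\chi_{\a}(t)k^{-1}=\chi_{\a}(\a(k)t)$ and therefore $[k,\chi_{\a}(t)]=\chi_{\a}((\a(k)-1)t)\in K^{\lam}\cap U_{\a}=\{1\}$. This yields $\a(k)=1$ for every real root $\a$, so $k$ centralizes each $U_{\a}$; as these groups generate $G(\C)$, I conclude $k\in Z(G(\C))$.

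I expect the main obstacle to be the second step: making precise that $\rho_{\lam}$ is compatible with the Bruhat/Birkhoff decompositions and genuinely detects the Weyl--group element of a cell, so that the identity can only arise from the trivial cell, together with the identity $B_+\cap B_-=H$ in the minimal group. This is exactly where the representation--theoretic input enters — the faithfulness of $V^{\lam}$ on the real root groups and the $[KP]$ presentation of $\widetilde{W}$ — to guarantee that $\rho_{\lam}$ does not collapse distinct cells; the remaining two steps are then short formal consequences.
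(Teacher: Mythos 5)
Your proof is correct in substance, but it takes a genuinely different route from the paper's. The paper's argument is essentially three lines: it factors $\rho_{\lam}$ through the completions, considering the sequence
$G(\C)\xrightarrow{\ \rho_{\lam}\ }G^{\lam}(\C)\xrightarrow{\ \iota\ }\Ghat^{\lam}(\C)\xrightarrow{\ \eps_{\lam}\ }\Ghat(\C)$,
observes that the composite is the natural map from $G(\C)$ to its R\'emy--Ronan completion, whose kernel equals $Center(G(\C))$ by ([RR], 1.B), and concludes from $Ker(\rho_{\lam})\leq Ker(\eps_{\lam}\circ\iota\circ\rho_{\lam})$. All structural work is thus outsourced to the cited results on completed groups (Theorem~\ref{kernel}, due to [BR] and [Mar]) and on the building action. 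You instead stay entirely inside the minimal groups: injectivity of $\rho_{\lam}$ on real root groups (where regularity of $\lam$ enters via $e_if_iv_{\lam}=\langle\lam,\a_i^{\vee}\rangle v_{\lam}\neq 0$), trapping the kernel in $H$ by compatibility of $\rho_{\lam}$ with Bruhat cells, and the classical commutator argument $[k,\chi_{\a}(t)]=\chi_{\a}((\a(k)-1)t)\in K^{\lam}\cap U_{\a}=\{1\}$, forcing $k$ to centralize the generating root groups. Your approach avoids completions and topological input altogether, at the price of invoking the twin BN-pair/RGD structure of both $G(\C)$ and $G^{\lam}(\C)$ and the compatibility of $\rho_{\lam}$ with it; as you yourself flag, that compatibility is the real content of the middle step, and it must be supplied by [Ti], [KP] and [CG] rather than by a new argument. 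Both routes are legitimate: the paper's is shorter, yours is more self-contained at the level of the minimal groups and makes visible exactly which structure pushes the kernel into the center.

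One repair is needed in your second step. The Birkhoff decomposition $G=\bigsqcup_{w}B_-wB_+$ has identity cell $B_-B_+$, so running your argument with it only yields $g\in B_-B_+$, which combined with $g\in B_+$ gives nothing beyond $g\in B_+$ (write $g=b_-b_+$; then $b_-\in B_+\cap B_-=H$, so you merely recover $g\in B_+$). What you want is the \emph{opposite Bruhat} decomposition $G=\bigsqcup_{w}B_-wB_-$: since $\rho_{\lam}(B_-)\subseteq B^{\lam}_-$ and the cells of $G^{\lam}(\C)$ relative to $B^{\lam}_-$ are pairwise disjoint, the fact that $\rho_{\lam}(g)=1$ lies in the identity cell forces $g\in B_-$, and then $g\in B_+\cap B_-=H$ as you intend. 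With that substitution your argument is complete.
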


\begin{proof} Consider the sequence of homomorphisms
\[
	\begin{CD}
	G(\C) @>\rho_\lam >> G^\lam(\C) @> \iota>> \Ghat^\lam(\C) @> \eps_\lam >> \Ghat(\C)
	\end{CD}
\]
where $\iota$ is the inclusion map. The composition of these three maps is the natural map from $G$ to $\Ghat$.
By Theorem~\ref{kernel},  $Ker(\eps_\lam)\leq Center(\Ghat^\lam(\C))$.  By ([RR], 1.B), 
$$Ker(\eps_\lam \circ\iota\circ \rho_\lam)=Center(G(\C))\leq H(\C)$$
where $$H(\C)=\langle\widetilde{h}_i(a):=\widetilde{s}_i(a)\widetilde{s}_i(-1)\mid a\in\C^{\times},\ \widetilde{h}_i(a)\cdot\widetilde{h}_i(b)=\widetilde{h}_i(ab)\rangle$$
for $\widetilde{s}_i(a):=X_i(a)S_iX_i(a^{-1})S_i^{-1}X_i(a).$ 
But $$Ker(\rho_{\lam})\leq Ker(\eps_\lam \circ\iota\circ \rho_\lam).$$
The result follows.
\end{proof}

\bigskip
Since  $\widetilde{h}_{\a_i}(1)=1$, we define   
$$H(\Z)=\langle \widetilde{h}_{\alpha_i}(-1)=\widetilde{s}_i(-1)^2\mid \widetilde{h}_{\alpha_i}(-1)^2=1,\ i\in\{1,\dots,\ell\}\rangle.$$
 Then $H(\Z)\cong(\Z/2\Z)^{rank(G)}$ and the natural map $H(\Z)\to H(\C)$ is injective.

\newpage

\newpage
 Injectivity of the natural map $G^{\lambda}(\Z)\to G^{\lambda}(\C)$ holds by definition. Injectivity of the natural map
$$\varphi:G(\Z)\to G(\C)$$
is  not currently known and depends on functorial properties of Tits' group ([Ti]).
\begin{lemma}\label{injective} Consider the  group homomorphism $\varphi:G(\Z)\to G(\C)$. Then $$\varphi(Center(G(\Z)))\subseteq Center(\varphi(G(\Z)))\leq Center(G(\C)).$$ \end{lemma}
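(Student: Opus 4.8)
The asserted chain splits into two inclusions, of which only the second carries content. For the first, $\varphi(Center(G(\Z)))\subseteq Center(\varphi(G(\Z)))$ is the elementary fact that a homomorphism carries the center into the center of the image: if $z\in Center(G(\Z))$ then $\varphi(z)$ commutes with every $\varphi(g)$, hence with all of $\varphi(G(\Z))$. So the plan is to prove the second inclusion $Center(\varphi(G(\Z)))\leq Center(G(\C))$, which is false for a general subgroup and must exploit the specific structure at hand.

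Fix $z\in Center(\varphi(G(\Z)))$. Since $X_i(n)=\varphi(X_i)^n\in\varphi(G(\Z))$ for every $n\in\Z$, the element $z$ commutes with $X_i(n)$ for all $n\in\Z$ and all $i$. The key step is to upgrade this to commutation with $X_i(t)$ for all $t\in\C$. I would work inside the representation $\rho:\mathfrak g\to End(V^\lambda_\C)$, where $\rho_\lambda(X_i(t))=\chi_{\alpha_i}(t)=\exp(t\rho(e_i))$ and $\rho(e_i)$ is locally nilpotent. For any vector $v$, both $n\mapsto\exp(n\rho(e_i))v$ and $n\mapsto\exp(n\,\rho_\lambda(z)\rho(e_i)\rho_\lambda(z)^{-1})v$ are polynomial in $n$, and the hypothesis $\rho_\lambda(z)\chi_{\alpha_i}(n)\rho_\lambda(z)^{-1}=\chi_{\alpha_i}(n)$ makes them agree on the Zariski--dense set $\Z\subset\A^1$. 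Hence they agree identically; comparing linear terms gives $\rho_\lambda(z)\rho(e_i)\rho_\lambda(z)^{-1}=\rho(e_i)$, and likewise for $f_i$. Therefore $\rho_\lambda(z)$ commutes with every $\chi_{\pm\alpha_i}(t)$; since these generate $G^\lambda(\C)$ we obtain $\rho_\lambda(z)\in Center(G^\lambda(\C))$, equivalently $[z,g]\in Ker(\rho_\lambda)$ for all $g\in G(\C)$.

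It remains to discharge the kernel, which I expect to be the main obstacle: the previous paragraph only shows that $z$ is central modulo $Ker(\rho_\lambda)$. By Lemma~\ref{center}, $Ker(\rho_\lambda)\leq Center(G(\C))$, so every commutator $[z,g]$ is central; consequently $t\mapsto[z,X_i(t)]$ is a group homomorphism $(\C,+)\to Ker(\rho_\lambda)$ that vanishes on $\Z$. Here I would invoke that $G$ is hyperbolic: its generalized Cartan matrix is nonsingular, so the simple roots $\alpha_i$ form a basis of $\mathfrak h^*$, the map $h\mapsto(\alpha_i(h))_i$ is an isogeny $H(\C)\to(\C^\times)^\ell$, and hence $Center(G(\C))=\{h:\alpha_i(h)=1\ \forall i\}$ is finite. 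A homomorphism from the divisible group $(\C,+)$ into a finite group is trivial, so $[z,X_i(t)]=1$ for all $t$, and symmetrically $[z,X_{-i}(t)]=1$. Since the groups $X_{\pm i}(t)$ ($i\in I$, $t\in\C$) generate $G(\C)$, this yields $z\in Center(G(\C))$, completing the second inclusion. The two delicate points are the passage from $\Z$ to all of $\C$ via polynomiality, and the finiteness of $Center(G(\C))$ that collapses the central ambiguity; everything else is formal.
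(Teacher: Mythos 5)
Your proof is correct, but it takes a genuinely different route from the paper --- and in fact it supplies an argument precisely where the paper's own proof is thinnest. The paper's proof is the purely formal one: the first inclusion is the same elementary observation you make, and the second is asserted on the grounds that $K=\varphi(G(\Z))$ is a subgroup of $G(\C)$ and ``$Center(G(\C))$ is abelian, so $Center(K)\leq Center(G(\C))$.'' That is exactly the general-subgroup fallacy you flag at the outset: the center of a subgroup need not lie in the center of the ambient group (any abelian non-central subgroup is a counterexample), so the paper's justification of the key inclusion is a non sequitur. Your argument actually proves it: you push a central element of $\varphi(G(\Z))$ through $\rho_{\lambda}$ into $Aut(V^{\lambda}_{\C})$, use local nilpotence to make $t\mapsto\chi_{\alpha_i}(t)v$ polynomial so that commutation at integer parameters propagates to all of $\C$, and then remove the residual central ambiguity via Lemma~\ref{center} ($Ker(\rho_{\lambda})\leq Center(G(\C))$), finiteness of $Center(G(\C))$ (correct here: hyperbolic generalized Cartan matrices are nonsingular, so the center is the finite kernel of an isogeny of tori --- a known check for the simply laced hyperbolic diagrams of Table 1), and divisibility of $(\C,+)$. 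Every input you use is legitimately available at that point of the paper: $\rho_{\lambda}$ over $\C$ and the centrality of its kernel are exactly Lemma~\ref{center}, and $Center(G(\C))\leq H(\C)$ is the [RR, 1.B] fact quoted in its proof; the only step worth spelling out is that $\chi_{-\alpha_i}(n)$ for $n\in\Z$ lies in $\rho_{\lambda}(\varphi(G(\Z)))$ (via $S_i X_i(\mp n) S_i^{-1}$), which your ``likewise for $f_i$'' requires. What the paper's route would buy, were it valid, is brevity and independence from the choice of $V^{\lambda}$; what yours buys is an actual proof of $Center(\varphi(G(\Z)))\leq Center(G(\C))$, the inclusion on which the paper's final theorem rests.
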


\begin{proof}  
Consider $\varphi:G(\Z)\to G(\C)$. Then $$\varphi(Center(G(\Z)))\subseteq Center(\varphi(G(\Z))).$$ 
and $$K=\varphi(G(\Z))\cong G(\Z)/Ker(\varphi)$$ is a subgroup  of $G(\C)$, so $\varphi$ takes $Center(G(\Z))$ to the center of a subgroup $K\leq G(\C)$. But $Center(G(\C))$ is abelian so 
$$Center(K)=Center(\varphi(G(\Z)))\leq Center(G(\C)).\ 
\square$$
\end{proof}

\bigskip

The following diagram commutes:
\begin{figure}[h]
	\centering
		\includegraphics[scale=1.0]{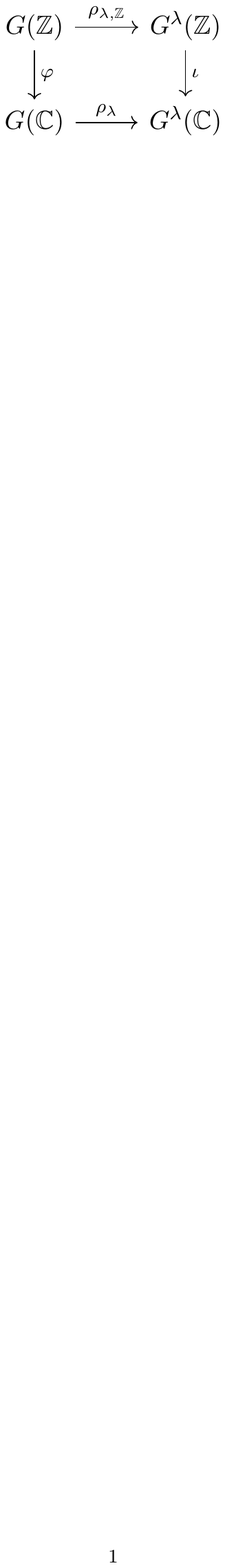}
\label{}
\end{figure}

That is, 
$$(\iota\circ \rho_{\lam,\Z}) (G(\Z))=(\rho_{\lam}\circ\varphi) (G(\Z))$$
so
$$Ker(\iota\circ \rho_{\lam,\Z}) =Ker(\rho_{\lam}\circ\varphi).$$

We  have
$$Ker(\rho_{\lam,\Z})\leq Ker(\iota\circ \rho_{\lam,\Z})=Ker(\rho_{\lam}\circ\varphi).$$
Thus if $\varphi$ is injective, then we have
$$Ker(\iota\circ \rho_{\lam,\Z}) =Ker(\rho_{\lam}\circ\iota)$$
or 
$$Ker(\rho_{\lam,\Z})\leq Ker(\iota\circ \rho_{\lam,\Z})=Ker(\rho_{\lam}\circ\iota)\leq Center(G(\C))\leq H(\C),$$
using Lemma~\ref{center}. Furthermore, if $\varphi$ is injective, then by Lemma~\ref{injective} we have $$Center(G(\Z))\leq Center(G(\C))$$
$$Ker(\rho_{\lam,\Z})\leq  Center(G(\Z))\leq Center(G(\C))\leq H(\C)$$
and $$Center(G(\Z))=G(\Z)\cap Center(G(\C))\leq G(\Z)\cap H(\C)=H(\Z).$$

We have proven the following.

\begin{theorem}  The kernel $K^{\lambda}$ of the map $\rho_{\lam,\Z}: G(\Z)\to G^{\lam}(\Z)$ lies in $H(\C)$ and if  the  group homomorphism $\varphi:G(\Z)\to G(\C)$ is injective, then $K^{\lambda}\leq H(\Z)\cong(\Z/2\Z)^{rank(G)}$.
\end{theorem}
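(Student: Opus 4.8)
The plan is to obtain both assertions by assembling the commuting square that links the integral maps to their complex counterparts, and then feeding the conclusions of Lemma~\ref{center} and Lemma~\ref{injective} through it. Write $K^{\lambda}=\Ker(\rho_{\lam,\Z})$ and let $\iota\colon G^{\lam}(\Z)\hookrightarrow G^{\lam}(\C)$ be the natural inclusion. The starting point is the commutativity
$$\iota\circ\rho_{\lam,\Z}=\rho_{\lam}\circ\varphi\colon G(\Z)\To G^{\lam}(\C),$$
together with the fact that $\iota$ is injective (the natural map $G^{\lam}(\Z)\to G^{\lam}(\C)$ is injective by construction). These two facts give
$$K^{\lambda}=\Ker(\rho_{\lam,\Z})=\Ker(\iota\circ\rho_{\lam,\Z})=\Ker(\rho_{\lam}\circ\varphi)=\varphi^{-1}\bigl(\Ker(\rho_{\lam})\bigr).$$
Applying Lemma~\ref{center}, $\Ker(\rho_{\lam})\leq Center(G(\C))\leq H(\C)$, and therefore $\varphi(K^{\lambda})\leq H(\C)$. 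This is the first assertion: $K^{\lambda}$ lies in $H(\C)$ via $\varphi$.

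For the conditional statement, I would assume $\varphi$ injective and argue in two moves. First, upgrade $\varphi(K^{\lambda})\leq Center(G(\C))$ to centrality of $K^{\lambda}$ in $G(\Z)$: for $k\in K^{\lambda}$ and $g\in G(\Z)$ one has $\varphi(kg)=\varphi(k)\varphi(g)=\varphi(g)\varphi(k)=\varphi(gk)$ since $\varphi(k)$ is central, and injectivity of $\varphi$ then forces $kg=gk$, so $K^{\lambda}\leq Center(G(\Z))$. Running the same injectivity argument in both directions (the forward direction being Lemma~\ref{injective}) identifies $Center(G(\Z))=\varphi^{-1}(Center(G(\C)))=G(\Z)\cap Center(G(\C))$. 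Combining this with $Center(G(\C))\leq H(\C)$ and the identification $G(\Z)\cap H(\C)=H(\Z)$ yields
$$K^{\lambda}\leq Center(G(\Z))=G(\Z)\cap Center(G(\C))\leq G(\Z)\cap H(\C)=H(\Z)\cong(\Z/2\Z)^{rank(G)},$$
which is the second assertion.

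The step I expect to be the genuine obstacle is the equality $G(\Z)\cap H(\C)=H(\Z)$: the inclusion $H(\Z)\leq G(\Z)\cap H(\C)$ is immediate from the definition $H(\Z)=\langle\widetilde{h}_{\alpha_i}(-1)\rangle$, but the reverse inclusion requires knowing that the only elements of $G(\Z)$ landing in the complex torus $H(\C)$ are the integral torus elements, i.e. those built from $\widetilde{h}_{\alpha_i}(a)$ with $a\in\Z^{\times}=\{\pm1\}$, so that no spurious element of $G(\Z)$ can map into $H(\C)$. This is exactly where the structure of the $\Z$--form and the isomorphism $H(\Z)\cong(\Z/2\Z)^{rank(G)}$ are used. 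Two further points to keep in mind, though not obstacles to the present assembly, are that the refinement to $H(\Z)$ is conditional on the (currently open) injectivity of $\varphi$, and that the input $\Ker(\rho_{\lam})\leq Center(G(\C))\leq H(\C)$ supplied by Lemma~\ref{center} is itself the analytic heart of the matter, imported by passing to the Carbone--Garland completion and invoking the completion--level results of [RR], [BR] and [Rou].
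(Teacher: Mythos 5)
Your proposal is correct and takes essentially the same route as the paper: the same commuting square $\iota\circ\rho_{\lam,\Z}=\rho_{\lam}\circ\varphi$, Lemma~\ref{center} to place $\Ker(\rho_{\lam})$ inside $Center(G(\C))\leq H(\C)$, and, under injectivity of $\varphi$, the chain $K^{\lambda}\leq Center(G(\Z))=G(\Z)\cap Center(G(\C))\leq G(\Z)\cap H(\C)=H(\Z)\cong(\Z/2\Z)^{rank(G)}$. If anything, you are more careful than the paper, since you make explicit the injectivity-pullback of centrality and you flag the identification $G(\Z)\cap H(\C)=H(\Z)$ as a step needing justification, which the paper asserts without proof.
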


\bigskip

\end{document}